\title{\textbf{Invariants for the Lagrangian Equivalence Problem}}
\author{\textsc{M. Castrill\'on L\'opez}
\and \textsc{J. Mu\~{n}oz Masqu\'e}
\and \textsc{E. Rosado Mar\'{\i}a}}
\date{}
\newtheorem{theorem}{Theorem}[section]
\newtheorem{proposition}[theorem]{Proposition}
\newtheorem{lemma}[theorem]{Lemma}
\theoremstyle{remark}
\newtheorem{example}[theorem]{Example}
\begin{document}

\maketitle

\begin{abstract}
\noindent Let $M$ be a connected smooth manifold, let $\operatorname{Aut}(p)$
be the group automorphisms of the bundle $p\colon \mathbb{R}\times M\to \mathbb{R}$,
and let $q\colon J^1(\mathbb{R},M)\times \mathbb{R\to }J^1(\mathbb{R},M)$
be the canonical projection.
Invariant functions on $J^r(q)$ under the natural action of $\operatorname{Aut}(p)$
are discussed in relationship with the Lagrangian equivalence problem.
The second-order invariants are determined geometrically
as well as some other higher-order invariants for $\dim M\geq 2$.
\end{abstract}

\medskip

\noindent\emph{Mathematics Subject Classification 2010:\/} Primary: 53A55;
Secondary: 34C14, 58A20, 58A30, 70G45

\smallskip

\noindent\emph{PACS numbers:\/} 02.20.Qs, 02.20.Tw, 02.40.-k, 02.40.Ky,
02.40.Vh, 02.40.Yy

\smallskip

\noindent\emph{Key words and phrases:\/} Fibred manifold, jet bundle,
Lagrangian density, Lagrangian equivalence problem.

\smallskip

\noindent\emph{Acknowledgements:\/} M.C.L. was partially supported by MINECO (Spain)
under grant MTM2015--63612-P.

\section{Introduction and preliminaries}
\subsection{Statement of the problem}
Let $M$ be a connected smooth manifold of dimension $m$. An automorphism
of the projection $p\colon \mathbb{R}\times M\to \mathbb{R}$, $p(x,y)=x$,
is a pair $\phi \in \operatorname{Diff}\mathbb{R}$,
$\Phi \in \operatorname{Diff}(\mathbb{R}\times M)$ making the following
diagram commutative:
\[
\begin{array}
[c]{ccc}
\mathbb{R}\times M & \overset{\Phi  }{\longrightarrow} & \mathbb{R}\times M\\
\downarrow\scriptstyle{p} &  & \downarrow \scriptstyle{p}\\
\mathbb{R} & \overset{\phi }{\longrightarrow} & \mathbb{R}
\end{array}
\]
Let $\operatorname{Aut}(p)$ be the group of automorphisms of $p$.
The diffeomorphism $\phi $ is completely determined by $\Phi $;
hence, $\operatorname{Aut}(p)$ is a subgroup in
$\operatorname{Diff}(\mathbb{R}\times M)$.
We denote by $J^{r}(\mathbb{R},M)$ the bundle of jets of order $r$
of (local) sections of the submersion $p$ and by
$\Phi ^{(r)}\colon J^r(\mathbb{R},M)\to  J^{r}(\mathbb{R},M)$
the $r$-th jet prolongation of $\Phi $, i.e.,
\[
\Phi ^{(r)}(j_x^r\sigma )
=j_{\phi (x)}^r\left( \Phi  \circ\sigma \circ \phi ^{-1}\right)  ,
\quad\forall  j_x^r\sigma \in J^r(\mathbb{R},M).
\]

Let $\mathcal{L},\bar{\mathcal{L}}\colon J^1(\mathbb{R},M)
\simeq \mathbb{R}\times TM\to \mathbb{R}$ be two first-order
(time dependent) Lagrangians on $M$. The Lagrangians densities
$\mathcal{L}dx$, $\mathcal{\bar{L}}dx$ are said to be equivalent
if there exists $\Phi \in \operatorname{Aut}(p)$ such that,
$(\Phi  ^{(1)})^\ast \left( \mathcal{L}dx\right)  =\mathcal{\bar{L}}dx$,
i.e., $\mathcal{\bar{L}}dx=(\mathcal{L\circ}\Phi ^{(1)})\phi ^\ast (dx)$,
or even,
\begin{equation}
\mathcal{\bar{L}}=\frac{d\phi }{dx}
\left( \mathcal{L\circ }\Phi ^{(1)}\right) .
\label{rho}
\end{equation}

The equivalence problem is one of the basic questions in the Calculus of Variations
and has been dealt with in several works. The notion of equivalence itself
has different interpretations. Equivalence up to an automorphism of $p$
(also known as \emph{fiber preserving} equivalence) is probably the most natural one
and is the approach we follow in this article, although there are other possibilities,
such as equivalence under the group $\mathrm{Diff}(\mathbb{R}\times M)$
(point transformations) or even larger groups. The usual tool used to solve
the equivalence problem in the literature has been the Cartan method.
This procedure is quite algorithmic but the computations become early too complex.
However, partial results have been obtained: Equivalence of quadratic Lagrangians
in \cite{Olver1} or \cite{Bad} (with respect to the Euler-Lagrange equations
in the last case), equivalence of Lagrangian for $M=\mathbb{R}$ in \cite{KO}
and \cite{Olver2}, or the equivalence problems for field theory Lagrangians defined
in the plane $\mathbb{R}^2$ and $M= \mathbb{R}$ in \cite{GS}. Our results are obtained
for an arbitrary manifold $M$, agreeing with the previous results in the particular case
$M= \mathbb{R}$. For that, we follow a different method connected with the notion
of invariant and infinitesimal transformations, defined as follows.

We first note that sections of the projection
\[
\begin{array}
[c]{l}
q\colon J^1(\mathbb{R},M)\times \mathbb{R}\to J^1(\mathbb{R},M),\\
q(j_x^1\sigma ,\lambda)=j_x^1\sigma ,
\end{array}
\]
correspond bijectively with functions in $C^\infty (J^1(\mathbb{R},M))$,
i.e., with first-order Lagrangians on the fibred manifold
$p\colon \mathbb{R}\times M\to \mathbb{R}$. The section
$s_{\mathcal{L}}\colon J^1(\mathbb{R},M)
\to J^1(\mathbb{R},M)\times \mathbb{R}$ of $q$ corresponding to
$\mathcal{L}\in C^\infty (J^1(\mathbb{R},M))$ is given by
$s_{\mathcal{L}}(j_x^1\sigma )=(j_x^1\sigma ,\mathcal{L}(j_x^1\sigma ))$.
In what follows $s_{\mathcal{L}}$ and $\mathcal{L}$ will be identified.

Let $\tilde{\Phi }^{(1)}\colon J^1(\mathbb{R},M)
\times \mathbb{R}\to J^1(\mathbb{R},M)\times \mathbb{R}$
be the automorphism of $q$ defined as follows:
\begin{equation}
\tilde{\Phi  }^{(1)}(j_x^1\sigma ,\lambda )
=(\Phi  ^{(1)}(j_x^1\sigma ),\alpha^{-1}\lambda ),
\label{tilde_Phi}
\end{equation}
where $\alpha \in \mathbb{R}^\ast $ is determined by,
$(\Phi ^\ast dx)_x=\alpha (dx)_x$. If $(\Psi ,\psi )$
is another automorphism of $p$ and $(\psi ^\ast dx)_x=\beta (dx)_x$,
then
\[
((\Phi \circ \psi )^\ast dx\mathbf{)}_x=\psi ^\ast (\Phi ^\ast dx)_x
=\psi ^\ast (\alpha dx)_x=\alpha\beta(dx)_x.
\]
Hence
\begin{align*}
\tilde{\Phi }^{(1)}
\Bigl(
\tilde{\Psi}^{(1)}(j_x^1\sigma ,\lambda )
\Bigr)
& =\tilde{\Phi }^{(1)}(\Psi^{(1)}(j_x^1\sigma ),\beta^{-1}\lambda )\\
& =
\Bigl(
\Phi ^{(1)}\left( \Psi^{(1)}(j_x^1\sigma )\right) ,
\alpha ^{-1}\beta ^{-1}\lambda
\Bigr)
\\
& =
\Bigl(
\left( \Phi \circ \Psi\right) ^{(1)}(j_x^1\sigma ),
\left( \alpha \beta\right) ^{-1}\lambda
\Bigr) .
\end{align*}
In other words,
$(\widetilde{\left(  \Phi\circ\Psi\right)  })^{(1)}=\tilde{\Phi}^{(1)}
\circ\tilde{\Psi}^{(1)}$.
\subsection{Differential invariants}
\begin{proposition}
\label{proposition1}With the preceding notations,
$\left( (\Phi ^{-1})^{(1)}\right) ^\ast (\mathcal{L}dx)
=\mathcal{\bar{L}}dx$ for $\Phi \in \mathrm{Aut}(p)$ if and only if,
$\tilde{\Phi }^{(1)}\circ s_{\mathcal{L}}\circ(\Phi ^{(1)})^{-1}
=s_{\mathcal{\bar{L}}}$. Accordingly, if the Lagrangian
densities $\mathcal{L}dx$ and $\mathcal{\bar{L}}dx$ are equivalent,
then every smooth function $I\colon J^r(q)\to \mathbb{R}$ which is invariant
under the subgroup of $\operatorname{Aut}(q)$ of automorphisms of the form
$(\tilde{\Phi }^{(1)},\Phi ^{(1)})$, $\Phi \in \operatorname{Aut}(p)$, takes
the same value on the two $r$-jets of the sections $s_{\mathcal{L}}$ and
$s_{\mathcal{\bar{L}}} $; namely,
\[
I\left( j_{j_x^1\sigma }^rs_{\mathcal{L}}\right)
=I\left( j_{j_x^1\sigma }^rs_{\mathcal{\bar{L}}}\right) ,
\quad\forall  j_x^1\sigma \in J^1(\mathbb{R},M).
\]
Therefore, invariant functions provide a method to distinguish non-equivalent
Lagrangian densities.
\end{proposition}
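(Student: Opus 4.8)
The plan is to treat the two assertions in turn, beginning with the equivalence (the ``if and only if''). I would prove it by unwinding the definitions, the one point needing care being the meaning of the scalar $\alpha$ in \eqref{tilde_Phi}: since $\Phi$ is fibred over $\phi$ one has $\Phi^\ast(p^\ast dx)=p^\ast(\phi^\ast dx)=p^\ast\bigl(\tfrac{d\phi}{dx}\,dx\bigr)$, so the $\alpha$ defined by $(\Phi^\ast dx)_x=\alpha(dx)_x$ is precisely $\tfrac{d\phi}{dx}$ evaluated at $x$; correspondingly $(\Phi^{-1})^{(1)}$ is fibred over $\phi^{-1}$, so $\bigl((\Phi^{-1})^{(1)}\bigr)^\ast dx$ carries the factor $\tfrac{d\phi^{-1}}{dx}$, i.e.\ the reciprocal of $\alpha$ read at $\phi^{-1}(x)$. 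Fixing $z=j^1_x\sigma$, expanding $\bigl((\Phi^{-1})^{(1)}\bigr)^\ast(\mathcal L\,dx)$ at $z$ and comparing with $\bar{\mathcal L}\,dx$ turns the first condition into the pointwise identity $\bar{\mathcal L}(z)=\alpha\bigl(\phi^{-1}(x)\bigr)^{-1}\mathcal L\bigl((\Phi^{-1})^{(1)}(z)\bigr)$, an analogue of \eqref{rho} with $\phi$ replaced by $\phi^{-1}$. On the other side, since $(\Phi^{(1)})^{-1}=(\Phi^{-1})^{(1)}$ sends $z$ to a $1$-jet $w$ with base point $\phi^{-1}(x)$, one has $s_{\mathcal L}(w)=(w,\mathcal L(w))$ and, applying \eqref{tilde_Phi} at the point $w$, $\tilde\Phi^{(1)}(w,\mathcal L(w))=\bigl(z,\alpha(\phi^{-1}(x))^{-1}\mathcal L(w)\bigr)$, which equals $s_{\bar{\mathcal L}}(z)$ exactly when the same identity holds; as $z$ is arbitrary, the two conditions are equivalent.

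For the consequence, assume $\mathcal L\,dx$ and $\bar{\mathcal L}\,dx$ are equivalent, so by definition there is $\Psi\in\operatorname{Aut}(p)$ with $(\Psi^{(1)})^\ast(\mathcal L\,dx)=\bar{\mathcal L}\,dx$. I would set $\Phi:=\Psi^{-1}\in\operatorname{Aut}(p)$; by functoriality of prolongation $(\Phi^{-1})^{(1)}=\Psi^{(1)}$, so $\bigl((\Phi^{-1})^{(1)}\bigr)^\ast(\mathcal L\,dx)=\bar{\mathcal L}\,dx$, and the first part gives $\tilde\Phi^{(1)}\circ s_{\mathcal L}\circ(\Phi^{(1)})^{-1}=s_{\bar{\mathcal L}}$. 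Prolonging the automorphism $(\tilde\Phi^{(1)},\Phi^{(1)})$ of $q$ to the $r$-jet level, the induced diffeomorphism of $J^r(q)$ carries $j^r_z s$ to $j^r_{\Phi^{(1)}(z)}\bigl(\tilde\Phi^{(1)}\circ s\circ(\Phi^{(1)})^{-1}\bigr)$ for every local section $s$ of $q$; with $s=s_{\mathcal L}$ and the previous identity it carries $j^r_z s_{\mathcal L}$ to $j^r_{\Phi^{(1)}(z)}s_{\bar{\mathcal L}}$. Applying $I$ and using its invariance under this prolonged map yields
\[
I\bigl(j^r_z s_{\mathcal L}\bigr)=I\bigl(j^r_{\Phi^{(1)}(z)}s_{\bar{\mathcal L}}\bigr)\qquad\text{for all }z\in J^1(\mathbb{R},M),
\]
which is the asserted equality of the values of $I$ along the two prolonged sections (since $\Phi^{(1)}$ is a bijection of $J^1(\mathbb{R},M)$, the two base functions $z\mapsto I(j^r_z s_{\mathcal L})$ and $z\mapsto I(j^r_z s_{\bar{\mathcal L}})$ are carried into one another by $\Phi^{(1)}$), and it gives at once the stated criterion for non-equivalence.

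The step I expect to be the genuine obstacle, everything else being formal, is the bookkeeping of base points. In the first part one must evaluate the scalar of \eqref{tilde_Phi} at the base point of its \emph{argument} $w=(\Phi^{(1)})^{-1}(z)$, namely at $\phi^{-1}(x)$, and recognise that the reciprocal so obtained is precisely the Jacobian factor produced by $\bigl((\Phi^{-1})^{(1)}\bigr)^\ast dx$; and in the second part one must orient the prolonged action on sections so that $s_{\mathcal L}$ is sent to $s_{\bar{\mathcal L}}$ and not conversely, which is exactly why one passes from the equivalence datum $\Psi$ to $\Phi=\Psi^{-1}$ before invoking the first part.
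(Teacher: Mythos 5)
Your proof is correct and follows essentially the same route as the paper's: the equivalence is obtained by pointwise unwinding of \eqref{tilde_Phi} together with \eqref{rho} applied to $\Phi^{-1}$, your base-point bookkeeping simply making explicit where the paper's factor $\Phi^{\prime-1}$ is evaluated. The second assertion, which the paper leaves as an immediate consequence of invariance, you spell out correctly as well---including the passage from the equivalence datum $\Psi$ to $\Phi=\Psi^{-1}$ and the resulting identity $I\bigl(j^r_z s_{\mathcal{L}}\bigr)=I\bigl(j^r_{\Phi^{(1)}(z)}s_{\bar{\mathcal{L}}}\bigr)$, i.e.\ equality of the two value functions up to the diffeomorphism $\Phi^{(1)}$, which is the intended reading of the displayed formula.
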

\begin{proof}
According to \eqref{rho}, the equation $((\Phi ^{-1})^{(1)})^\ast
(\mathcal{L}dx)=\mathcal{\bar{L}}dx$ is equivalent to say, $\mathcal{\bar{L}}
=\Phi ^{\prime-1}(\mathcal{L\circ(}\Phi ^{-1})^{(1)})$. We have
\begin{align*}
\left( \tilde{\Phi }^{(1)}\circ s_{\mathcal{L}}\circ (\Phi ^{(1)})^{-1}\right)
\left( j_x^1\sigma \right) & =\left( \tilde{\Phi }^{(1)}\circ
s_{\mathcal{L}}\right) \left( (\Phi ^{(1)})^{-1}\left( j_x^1
\sigma \right) \right) \\
& =\tilde{\Phi }^{(1)}\left( (\Phi ^{(1)})^{-1}\left( j_x^1\sigma \right)
,\mathcal{L}\left( \left( \Phi ^{-1}\right) ^{(1)}\left( j_x^1
\sigma \right) \right) \right) \\
& =\left( j_x^1\sigma ,\Phi ^{\prime-1}\left( \mathcal{L\circ}\left(
\Phi ^{-1}\right) ^{(1)}\right) \left( j_x^1\sigma \right) \right) \\
& =s_{\mathcal{\bar{L}}}\left( j_x^1\sigma \right) .
\end{align*}
\end{proof}

The approach we follow for the determination of the invariant functions
will be through the infinitesimal representation of projectable (resp.\ vertical)
vector fields $X$ of $\mathbb{R}\times M\to \mathbb{R}$, that is, we will study
the functions $I:J^r(q)\to \mathbb{R}$ such that $(\tilde{X}^{(1)})^{(r)}(I)=0$,
for $r\leq 2$, according to the notation of the next section. In principle,
these functions will only be invariant under the connected component
of the identity $\mathrm{Aut}(p)_0$ (resp.\ $\mathrm{Aut}^v(p)_0$
where $\mathrm{Aut}^v(p)$ is the group of vertical automorphisms of $p$),
so that the number of independent invariant functions under the full group
$\mathrm{Aut}(p)$ (resp.\ $\mathrm{Aut}^v(p)$) is less or equal to those obtained
by that infinitesimal representation. However, we will conclude the main result
by providing explicitly enough functions that are invariant under $\mathrm{Aut}(p)$
(resp.\ $\mathrm{Aut}^v(p)$).
\subsection{Zero-order invariants}
Let
\begin{equation}
X=u\tfrac{\partial }{\partial x}+v^\alpha \tfrac{\partial }{\partial y^\alpha },
\quad u\in C^\infty (\mathbb{R}),\; v^\alpha \in C^\infty (\mathbb{R}\times M),
\label{X}
\end{equation}
be the local expression of a $p$-projectable vector field on a fibred
coordinate system $(x,y^\alpha )$, $y^\alpha \in C^\infty (\mathbb{R}\times M)$,
$1\leq\alpha\leq m$, for the natural projection $p\colon \mathbb{R}\times M
\to \mathbb{R}$, where we have used the Einstein summation convention.
We have (cf.\ \cite[2.1.1]{MR})
\begin{equation}
\left\{
\begin{array}
[c]{l}
X^{(1)}=u\tfrac{\partial }{\partial x}
+v^\alpha \tfrac{\partial }{\partial y^\alpha }
+v_{1}^\alpha \tfrac{\partial }{\partial\dot{y}^\alpha },
\smallskip\\
v_1^\alpha =\tfrac{\partial v^\alpha }{\partial x}
-\tfrac{du}{dx}\dot{y}^\alpha
+\tfrac{\partial v^\alpha }{\partial y^\beta }\dot{y}^\beta ,
\end{array}
\right.  \label{X(1)}
\end{equation}
where $(x,y^\alpha ,\dot{y}^\alpha )$ is the coordinate system on
$J^1(\mathbb{R},M)$ induced from $(x,y^\alpha )$, $1\leq \alpha \leq m$;
i.e., $\dot{y}^\alpha (j_{x_0}^1\sigma )=
(d(y^\alpha \circ \sigma )/dx)(x_0)$.

Let us compute $\tilde{X}^{(1)}$. Let
$z\colon J^1(\mathbb{R},M)\times \mathbb{R}\to \mathbb{R}$
be the projection onto the second factor, i.e.,
$z(j_x^1\sigma ,\lambda )=\lambda $. The functions
$(x,y^\alpha ,\dot{y}^\alpha ,z)$ are a system of coordinates on
$J^1(\mathbb{R},M)\times \mathbb{R}$. If $X$ is the infinitesimal
generator of a pair of one-parameter groups
$\phi _t\in \operatorname{Diff}\mathbb{R}$,
$\Phi _t\in \operatorname{Diff}(\mathbb{R}\times M)$, then according
to \eqref{tilde_Phi} we have
$z\circ\tilde{\Phi }^{(1)}=\left. z\right/ \tfrac{d\phi _t}{dx}$.
Hence
\begin{align*}
\tilde{X}^{(1)}\left( x\right)
& =\left.
\tfrac{\partial }{\partial t}\right\vert _{t=0}(x\circ\phi _t^{(1)})
=\left.
\tfrac{\partial }{\partial t}\right\vert _{t=0}\left( x\circ\Phi _{t}\right)
=u,\\
\tilde{X}^{(1)}\left( y^\alpha \right)
& =\left.
\tfrac{\partial }{\partial t}\right\vert _{t=0}(y^\alpha \circ\Phi _{t}^{(1)})
=\left.
\tfrac{\partial }{\partial t}\right\vert _{t=0}
\left( y^\alpha \circ \Phi _{t}\right) =v^\alpha ,\\
\tilde{X}^{(1)}\left( \dot{y}^\alpha \right)
& =\left.
\tfrac{\partial }{\partial t}\right\vert _{t=0}
(\dot{y}^\alpha \circ\Phi _{t}^{(1)})=v_1^\alpha ,
\end{align*}
\begin{align*}
\tilde{X}^{(1)}(z)
& =z\left.  \tfrac{\partial }{\partial t}\right\vert _{t=0}
\left( \tfrac{d\Phi _{t}}{dx}\right) ^{-1}\\
& =-z\left.
\tfrac{\partial }{\partial t}\right\vert _{t=0}
\left(
\tfrac{d\Phi _{t}}{dx}\right) \\
& =-\tfrac{du}{dx}z.
\end{align*}
Therefore $\tilde{X}^{(1)}=u\tfrac{\partial }{\partial x}
+v^\alpha \tfrac{\partial }{\partial y^\alpha }
+v_1^\alpha \tfrac{\partial }{\partial\dot{y}^\alpha }
-\tfrac{du}{dx}z\tfrac{\partial }{\partial z}$.

Let $\mathcal{D}^{0}$ be the involutive distribution on
$J^1(\mathbb{R},M)\times\mathbb{R}$ generated by the vector fields
$\tilde{X}^{(1)}$, $X$ being an arbitrary $p$-projectable
vector field. The first integrals of $\mathcal{D}^{0}$
are the zero-order invariant functions. Taking the formulas
\eqref{X(1)} into account, we have
\[
v_1^\alpha (j_x^1\sigma )=\tfrac{\partial v^\alpha }{\partial x}
(\sigma (x))+\tfrac{\partial(y^\beta \circ\sigma )}{\partial x}(x)
\tfrac {\partial v^\alpha }{\partial y^\beta }(\sigma (x))
-\tfrac{\partial (y^\alpha \circ\sigma )}{\partial x}(x)
\tfrac{du}{dx}(x),
\]
and evaluating $\tilde{X}^{(1)}$ at a point $(j_x^1\sigma ,\lambda )$,
\begin{align*}
\left( \! \tilde{X}^{(1)}\!\right) _{(j_x^1\sigma ,\lambda )}\! &
=\! u(x)\left(
\! \tfrac{\partial }{\partial x}
\! \right) _{(j_x^1\sigma ,\lambda)}
\! +\!v^\alpha (\sigma (x))
\left(
\! \tfrac{\partial }{\partial y^\alpha }
\! \right) _{(j_x^1\sigma ,\lambda)}\\
& \! +\! \tfrac{\partial v^\alpha }{\partial x}(\sigma (x))
\left(
\! \tfrac{\partial }{\partial \dot{y}^\alpha }\!
\right) _{(j_x^1\sigma ,\lambda) }
\! +\! \tfrac{\partial v^\alpha }{\partial y^\beta }
(\sigma (x))\tfrac{\partial(y^\beta \circ\sigma )}{\partial x}(x)
\left(
\! \tfrac{\partial }{\partial \dot{y}^\alpha }
\!\right) _{(j_x^1
\sigma ,\lambda )}\\
&  \! -\!\lambda
\tfrac{du}{dx}(x)\left(
\!\tfrac{\partial }{\partial z}
\!\right) _{(j_x^1\sigma ,\lambda )}
\! -\! \tfrac{du}{dx}(x)
\tfrac{\partial(y^\alpha \circ \sigma )}{\partial x}(x)
\left( \!\tfrac{\partial }{\partial\dot{y}^\alpha }
\!\right) _{(j_x^1\sigma ,\lambda)}
\end{align*}
\begin{align*}
& =u(x)\left(
\tfrac{\partial }{\partial x}
\right) _{(j_x^1\sigma ,\lambda )}
+v^\alpha (\sigma (x))\left(
\tfrac{\partial }{\partial y^\alpha }
\right) _{(j_x^1\sigma ,\lambda )}
+\tfrac{\partial v^\alpha }{\partial x}(\sigma (x))
\left(
\tfrac{\partial }{\partial\dot{y}^\alpha }
\right) _{(j_x^1\sigma ,\lambda )}\\
& +\tfrac{\partial v^\alpha }{\partial y^\beta }(\sigma (x))
\left(
\dot{y}^\beta \tfrac{\partial }{\partial \dot{y}^\alpha }
\right) _{(j_x^1\sigma ,\lambda )}
-\tfrac{du}{dx}(x)\left( z\tfrac{\partial }{\partial z}
+\dot{y}^\alpha \tfrac{\partial }{\partial \dot{y}^\alpha }
\right) _{(j_x^1\sigma ,\lambda )}.
\end{align*}
As the values $u(x)$, $(du/dx)(x)$, $v^\alpha (\sigma (x))$,
$(\partial v^\alpha /\partial x)(\sigma (x))$,
$(\partial v^\alpha /\partial y^\beta )(\sigma (x))$ are arbitrary,
it follows that the vector fields $\partial /\partial x$,
$\partial/\partial y^\alpha $, $\partial/\partial\dot{y}^\alpha $,
$z\partial/\partial z$ are a local basis of the distribution
$\mathcal{D}^{0}$ on the dense open subset $z\neq 0$; the generic rank
of $\mathcal{D}^{0}$ thus coincides with the dimension
of $J^1(\mathbb{R} ,M)\times\mathbb{R}$ ; hence the only zero-order
invariants are the constants.
\section{Jet-prolongation formulas\label{prolongation1}}
Let $q\colon N\times\mathbb{R\to }N$ be the canonical projection onto
the first factor, where $N$ is smooth manifold of dimension $\nu $
and local coordinates $(t^1,\dotsc,t^\nu )$.

Let $z\colon N\times \mathbb{R}\to \mathbb{R}$ be the projection
onto the second factor. The lift by infinitesimal contact transformation
to the jet bundle $J^r(q)$ of a $q$-projectable vector field on
$N\times \mathbb{R}$ with local expression
\[
Y=\xi ^a\left( t^1,\dotsc,t^\nu \right) \tfrac{\partial }{\partial t^a
}+\eta \left( t^1,\dotsc,t^\nu ,z\right) \tfrac{\partial }{\partial z},
\]
is given by,
\begin{equation}
Y^{(r)}=\xi ^a\left( t^1,\dotsc,t^\nu \right)
\tfrac{\partial }{\partial t^a}
+\eta \left( t^1,\dotsc,t^\nu ,z\right)
\tfrac{\partial }{\partial z}
+\sum\nolimits_{1\leq|I|\leq r}\eta _I
\tfrac{\partial }{\partial z_I},
\label{Yr}
\end{equation}
where $I=(i_1,\dotsc,i_\nu )\in \mathbb{N}^\nu $ is a multi-index of order
$|I|=i_1+\ldots+i_\nu $, $(t^a,z_I)$, $1\leq a\leq \nu $, $|I|\leq r$, is
the coordinate system induced on $J^r(q)$, namely, $z_0=z$,
$z_{I}(j_\zeta ^r\mathcal{L})
=(\partial^{|I|}\mathcal{L}/\partial t^I)(\zeta )$,
$\mathcal{L}\in C^\infty (N)$, and the function $\eta _I$ is defined as follows:
\begin{equation}
\eta _I=\mathbb{D}^I\left( \eta -\xi ^az_a\right) +\xi ^az_{I+(a)},
\label{eta_I}
\end{equation}
with $(a)=(0,\dotsc,\overset{(a}{1},\dotsc,0)$, $1\leq a\leq \nu $,
$\mathbb{D}_{t^a}=\tfrac{\partial }{\partial t^a}
+\sum\nolimits_{|I|=0}^\infty z_{I+(a)}\tfrac{\partial }{\partial z_I}$
denotes the total derivative with respect to the coordinate $t^a$
and the operator $\mathbb{D}^I$ is given by,
\[
\mathbb{D}^I=\left( \mathbb{D}_{t^1}\right) ^{i_1}\circ \cdots
\circ\left( \mathbb{D}_{t^\nu }\right) ^{i_\nu }.
\]
By using Leibnitz's formula for the $r$-th derivative of a product,
the formula \eqref{eta_I} transforms into the following:
\begin{equation}
\eta _{I}=\mathbb{D}^{I}(\eta )-\sum\nolimits_{J<I}\tbinom{I}{J}
\frac{\partial^{|I-J|}\xi^a}{\partial t^{I-J}}z_{J+(a)}.
\label{eta_I_bis}
\end{equation}

In the case we are dealing with, $N=J^1(\mathbb{R},M)$, $\nu =1+2m$,
$Y=\tilde{X}^{(1)}$, and
\[
\left.
\begin{array}
[c]{l}
(t^a)_{a=1}^{\nu}=(x,y^\alpha ,\dot{y}^\alpha ),\\
\left( \left( \xi^a\right) _{a=1}^\nu {;}\eta\right)
=\left( u,v^\alpha ,v_1^\alpha {;}-\tfrac{du}{dx}z\right) ,
\end{array}
\right\}  1\leq\alpha\leq m,
\]
$u\in C^\infty (\mathbb{R})$, $v^\alpha \in C^\infty (\mathbb{R}\times M)$,
and $v_1^\alpha $ is given in \eqref{X(1)}. Next, instead of the general
formulas above, we use the following ones for $J^1(q)$ in our particular
case:
$(x,y^\alpha ,\dot{y}^\alpha ,z,z_x,z_{y^\alpha },z_{\dot{y}^\alpha })$,
$1\leq\alpha\leq m$.
\section{First-and second-order invariants\label{first/second/order}}
\subsection{First order\label{first/order}}
First of all, let us compute $(\tilde{X}^{(1)})^{(1)}$. By applying \eqref{Yr}
to $Y=\tilde{X}^{(1)}$ for $r=1$, we have
\[
(\tilde{X}^{(1)})^{(1)}=u\tfrac{\partial }{\partial x}
+v^\alpha \tfrac{\partial }{\partial y^\alpha }
+v_1^\alpha \tfrac{\partial }{\partial\dot{y}^\alpha }
-\tfrac{du}{dx}z\tfrac{\partial }{\partial z}
+A\tfrac{\partial }{\partial z_x}
+B_{\alpha}\tfrac{\partial }{\partial z_{y^\alpha }}
+C_{\alpha}\tfrac{\partial }{\partial z_{\dot{y}^\alpha }},
\]
\begin{align}
A\!\! & =\!\! -\tfrac{d^2u}{dx^2}z\!-\! 2\tfrac{du}{dx}z_x
\! -\! \tfrac{\partial v^\alpha }{\partial x}z_{y^\alpha }
\! -\! \tfrac{\partial^2v^\alpha }{\partial x^2}z_{\dot{y}^\alpha }
\! +\! \tfrac{d^2u}{dx^2}\dot{y}^\alpha z_{\dot{y}^\alpha }
\! -\! \tfrac{\partial ^2v^\alpha }{\partial x\partial y^\beta }
\dot{y}^\beta z_{\dot{y}^\alpha },
\label{A}\\
B_\alpha \!\! & =\!\! -\tfrac{du}{dx}z_{y^\alpha }
\! -\! \tfrac{\partial v^\beta }{\partial y^\alpha }z_{y^\beta }
\! -\! \left(
\! \tfrac{\partial ^2v^\beta }{\partial x\partial y^\alpha }
\! +\! \tfrac{\partial^2v^\beta }{\partial y^\alpha \partial y^\gamma }
\dot{y}^\gamma\! \right)
z_{\dot{y}^\beta },
\label{B}\\
C_\alpha\!\! & =\!\! -\tfrac{\partial v^\beta }{\partial y^\alpha }
z_{\dot{y}^\beta }.
\label{C}
\end{align}
Hence
\[
\begin{array}
[c]{rl}
(\tilde{X}^{(1)})^{(1)}= & u\tfrac{\partial }{\partial x}
+v^\alpha \tfrac{\partial }{\partial y^\alpha }\\
& -\tfrac{du}{dx}\left( z\tfrac{\partial }{\partial z}
+\dot{y}^\alpha \tfrac{\partial }{\partial\dot{y}^\alpha }
+2z_x\tfrac{\partial }{\partial z_x}
+z_{y^\alpha }\tfrac{\partial }{\partial z_{y^\alpha }}\right) \\
& +\tfrac{\partial v^\alpha }{\partial x}
\left( \tfrac{\partial }{\partial\dot{y}^\alpha }
-z_{y^\alpha }\tfrac{\partial }{\partial z_x}\right)
+\tfrac{\partial v^\alpha }{\partial y^\beta }
\left(
\dot{y}^\beta \tfrac{\partial }{\partial\dot{y}^\alpha }
-z_{y^\alpha }\tfrac{\partial }{\partial z_{y^\beta }}
-z_{\dot{y}^\alpha }\tfrac{\partial }{\partial z_{\dot{y}^\beta }}
\right) \\
& +\tfrac{d^2u}{dx^2}
\left( -z+\dot{y}^\gamma z_{\dot{y}^\gamma }\right)
\tfrac{\partial }{\partial z_x}
-\tfrac{\partial^2v^\alpha }{\partial x^2}z_{\dot{y}^\alpha }
\tfrac{\partial }{\partial z_x}\\
& -\tfrac{\partial^2v^\beta }{\partial x\partial y^\alpha }
\left( \dot{y}^\alpha z_{\dot{y}^\beta }
\tfrac{\partial }{\partial z_x}+z_{\dot{y}^\beta }
\tfrac{\partial }{\partial z_{y^\alpha }}\right) \\
& -\sum_{\alpha\leq\beta}
\tfrac{\partial^2v^\gamma }{\partial y^{\alpha }\partial y^\beta }
\tfrac{1}{1+\delta_{\alpha\beta}}z_{\dot{y}^\gamma}
\left( \dot{y}^\beta \tfrac{\partial }{\partial z_{y^\alpha }}
+\dot{y}^\alpha \tfrac{\partial }{\partial z_{y^\beta }}\right) ,
\end{array}
\]
and the distribution $\mathcal{D}^1$ generated by all the vector fields
$(\tilde{X}^{(1)})^{(1)}$ on $J^1(q)$ is spanned by
$\tfrac{\partial }{\partial x}$, $\tfrac{\partial }{\partial y^\alpha }$,
and the following vector fields:
\begin{align*}
\chi & =z\tfrac{\partial }{\partial z}+\dot{y}^\alpha
\tfrac{\partial }{\partial\dot{y}^\alpha }
+2z_x\tfrac{\partial }{\partial z_x}
+z_{y^\alpha }\tfrac{\partial }{\partial z_{y^\alpha }},\\
\chi_{\alpha}
& =\tfrac{\partial }{\partial\dot{y}^\alpha }
-z_{y^\alpha }\tfrac{\partial }{\partial z_x},
\quad
1\leq\alpha\leq m,\\
\chi_\alpha ^\beta
& =\dot{y}^\beta \tfrac{\partial }{\partial\dot{y}^\alpha }
-z_{y^\alpha }\tfrac{\partial }{\partial z_{y^\beta }}
-z_{\dot{y}^\alpha }\tfrac{\partial }{\partial z_{\dot{y}^\beta }},
\quad
\alpha ,\beta =1,\dotsc,m,\\
\chi ^\prime
& =\left( -z+\dot{y}^\gamma z_{\dot{y}^\gamma }\right)
\tfrac{\partial }{\partial z_x},\\
\bar{\chi }_\alpha
& =z_{\dot{y}^\alpha }\tfrac{\partial }{\partial z_x},
\quad
1\leq\alpha\leq m,\\
\bar{\chi }_\beta ^\alpha
& =z_{\dot{y}^\beta }\left( \dot{y}^\alpha
\tfrac{\partial }{\partial z_x}
+\tfrac{\partial }{\partial z_{y^\alpha }}\right) ,
\quad
\alpha ,\beta =1,\dotsc,m,\\
\chi _{\alpha \beta}^\gamma
& =\tfrac{1}{1+\delta _{\alpha \beta }}z_{\dot{y}^\gamma }
\left( \dot{y}^\beta \tfrac{\partial }{\partial z_{y^\alpha }}
+\dot{y}^\alpha \tfrac{\partial }{\partial z_{y^\beta }}\right) ,
\quad
1\leq \alpha \leq \beta \leq m, 1\leq \gamma \leq m.
\end{align*}
Let us fix two indices $\alpha _0,\beta _0$. From the formulas above,
on the dense open subset
$U(\alpha _0,\beta _0)
=\{ z\neq 0,\dot{y}^{\beta _0}\neq 0,z_{\dot{y}^{\alpha_0}}
\neq 0,z_{\dot{y}^{\beta_0}}\neq 0\}
\subset J^1(q)$, we have
\begin{align*}
\tfrac{\partial }{\partial z_x}
\! & =\! \tfrac{1}{z_{\dot{y}^{\alpha _0}}}
\bar{\chi }_{\alpha _0},\\
\tfrac{\partial }{\partial\dot{y}^\alpha }
\! & =\!\tfrac{1}{z_{\dot{y}^{\alpha _0}}}
\left( z_{\dot{y}^{\alpha _0}}\chi _\alpha
+z_{y^\alpha }\bar{\chi }_{\alpha _0}\right) ,\\
\tfrac{\partial }{\partial z_{y^\alpha }}
\! & =\! \tfrac{1}{z_{\dot{y}^{\beta _0}}}
\left( \bar{\chi}_{\beta_0}^\alpha -\dot{y}^\alpha
\bar{\chi}_{\beta_0}\right) ,\\
\tfrac{\partial }{\partial z_{\dot{y}^\beta }}
\! & =\! \tfrac{1}{z_{\dot{y}^{\alpha _0}}
z_{\dot{y}^{\beta_0}}}
\left[ \dot{y}^\beta
\left( z_{\dot{y}^{\beta _0}}\chi _{\alpha _0}
+z_{y^{\alpha_0}}\bar{\chi }_{\beta _0}\right)
-z_{y^{\alpha _0}}\bar{\chi }_{\beta _0}^\beta
-z_{\dot{y}^{\beta _0}}\chi _{\alpha _0}^\beta
+z_{y^{\alpha _0}}\dot{y}^\beta \bar{\chi }_{\beta _0}
\right] ,\\
\tfrac{\partial }{\partial z}
\! & =\! \tfrac{1}{z}
\left( \chi -\dot{y}^\alpha \chi _\alpha \right)
-\tfrac{1}{z~z_{\dot{y}^{\beta _0}}}
\left(
2z_x\bar{\chi }_{\beta _0}+z_{y^\alpha }\bar{\chi }_{\beta _0}^\alpha
\right) ,
\end{align*}
thus proving that the rank of $\mathcal{D}^1$ on $U(\alpha _0,\beta _0)$
is equal to the dimension of the tangent space to $J^1(q)$ at each point.
Hence the only first-order differential invariants are the constants.
\subsection{The Hessian metric\label{Hessian}}
Let $p^r\colon J^r(\mathbb{R},M)\to \mathbb{R}$,
$p^{r,r^\prime }\colon J^r(\mathbb{R},M)\to J^{r^\prime }(\mathbb{R},M)$,
$r>r^\prime $, be the canonical projections {of the jet bundles of
$p\colon \mathbb{R}\times M\to \mathbb{R}$.}

The map $p^{10}\colon J^1(\mathbb{R},M)\to J^0(\mathbb{R},M)
=\mathbb{R}\times M$ is an affine bundle modelled over the vector bundle
$W=p^\ast T^\ast \mathbb{R}\otimes V(p)$ $\cong \mathbb{R}\times TM$.
Hence each fibre $F_{x,y}=(p^{10})^{-1}(x,y)$, $(x,y)\in \mathbb{R}\times M$,
is an affine space modelled over $T_{y}M$. We set $\mathcal{L}^{x,y}=\left.
\mathcal{L}\right\vert _{F_{x,y}}$,
$\forall \mathcal{L}\in C^\infty (J^1(\mathbb{R},M))$.

Let $A$ be a real affine space of finite dimension modelled over a real vector
space $V$, endowed with its canonical $C^\infty $ structure. Every vector
$v\in V$ induces a vector field $\tilde{v}\in \mathfrak{X}(A)$ given by,
\[
\tilde{v}_x(f)=\left. \tfrac{d}{dt}\right\vert _{t=0}f(x+tv),
\quad
\forall x\in A,\;\forall  f\in C^\infty (A).
\]
If $v_1,\dotsc,v_n$ is a basis for $V$, then $\tilde{v}_1,\dotsc,\tilde{v}_n$
is a basis for the $C^\infty (A)$-module $\mathfrak{X}(A)$.

There exists a unique linear connection $D^A$ on $A$ such that,
$D^A\tilde{v}=0$, $\forall  v\in V$. This connection is symmetric and flat.

The image of a vector field $X\in \mathfrak{X}(M)$ by a diffeomorphism
$\varphi \colon M\to  M^\prime $ is the vector field
$\varphi \cdot X\in \mathfrak{X}(M^\prime )$ defined as follows:
$(\varphi \cdot X)_{x^\prime }=\varphi _{\ast}(X_{\varphi ^{-1}(x^\prime )})$,
$\forall  x^\prime \in M^\prime $.

If $D$ is a linear connection on $M$, then $\varphi \cdot D$
denotes the linear connection on $M^\prime $ defined
by the following formula:
\[
(\varphi \cdot D)_{X^\prime }Y^\prime
=\varphi \cdot
\left(
D_{\varphi ^{-1}\cdot X^\prime }
\left(
\varphi ^{-1}\cdot Y^\prime
\right)
\right) ,
\quad
\forall X^\prime ,Y^\prime \in \mathfrak{X}(M^\prime ).
\]
If $\omega $ is a $1$-form on an affine space $A$, then $D^A\omega $
is the covariant tensor of degree $2$ given by,
$\left( D^A\omega \right)
(X,Y)=\left( D_X^A\omega \right) (Y)=X\left( \omega (Y)\right)
-\omega \left( D_X^AY\right) $, $\forall X,Y\in \mathfrak{X}(A)$.
\begin{lemma}
\label{lemma_D}With the previous notations and definitions, for every
isomorphism of affine spaces $\alpha\colon A\to  A^\prime $ and every
$1$-form $\omega ^\prime $ on $A^\prime $ the following formulas hold:
\[
\alpha\cdot D^A=D^{A^\prime },\qquad\alpha^\ast (D^{A^\prime }
\omega ^\prime )=D^A\left( \alpha^\ast \omega ^\prime \right) .
\]
\end{lemma}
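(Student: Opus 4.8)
The plan is to prove the two formulas separately, since the second is essentially a corollary of the first together with the naturality of the covariant derivative of forms under affine isomorphisms.

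\medskip

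\noindent\textbf{First formula: $\alpha\cdot D^A=D^{A'}$.}
First I would recall that $D^{A'}$ is characterized by the property that it kills every constant vector field $\tilde{v}'$, $v'\in V'$, where $V'$ is the model vector space of $A'$; this characterization was established in the sentence preceding the lemma. So it suffices to check that $\alpha\cdot D^A$ has the same property, i.e. that $(\alpha\cdot D^A)\tilde{v}'=0$ for all $v'\in V'$. Let $\vec{\alpha}\colon V\to V'$ be the linear part of $\alpha$, so that $\alpha(x+tv)=\alpha(x)+t\,\vec{\alpha}(v)$. The key step is the computation that $\alpha$ carries the constant field $\widetilde{\vec{\alpha}^{-1}(v')}$ on $A$ to the constant field $\tilde{v}'$ on $A'$; this is immediate from the definition $\tilde{v}_x(f)=\frac{d}{dt}\big|_{t=0}f(x+tv)$ and the affine relation above, since for $f'\in C^\infty(A')$ one has
\[
(\alpha_*\widetilde{\vec{\alpha}^{-1}(v')}_x)(f')
=\widetilde{\vec{\alpha}^{-1}(v')}_x(f'\circ\alpha)
=\left.\tfrac{d}{dt}\right|_{t=0}f'\bigl(\alpha(x)+t\,v'\bigr)
=\tilde{v}'_{\alpha(x)}(f').
\]
Hence $\alpha\cdot\widetilde{\vec{\alpha}^{-1}(v')}=\tilde{v}'$. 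Then, by the definition of $\alpha\cdot D^A$ and the fact that $D^A\widetilde{\vec{\alpha}^{-1}(v')}=0$, we get $(\alpha\cdot D^A)_{X'}\tilde{v}'=\alpha\cdot\bigl(D^A_{\alpha^{-1}\cdot X'}(\alpha^{-1}\cdot\tilde{v}')\bigr)=\alpha\cdot\bigl(D^A_{\alpha^{-1}\cdot X'}\,\widetilde{\vec{\alpha}^{-1}(v')}\bigr)=0$ for every $X'\in\mathfrak{X}(A')$. Since the constant fields $\tilde{v}'$ form a $C^\infty(A')$-basis of $\mathfrak{X}(A')$ and a linear connection is determined by its action on such a basis (using the Leibniz rule in the second argument), it follows that $\alpha\cdot D^A=D^{A'}$.

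\medskip

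\noindent\textbf{Second formula: $\alpha^*(D^{A'}\omega')=D^A(\alpha^*\omega')$.}
Here I would argue by the general naturality of the connection-induced derivative of $1$-forms under a diffeomorphism $\alpha$ intertwining the two connections. Concretely, for $X,Y\in\mathfrak{X}(A)$ evaluate using the formula $(D^A\eta)(X,Y)=X(\eta(Y))-\eta(D^A_XY)$ with $\eta=\alpha^*\omega'$, together with the elementary identities $(\alpha^*\omega')(Y)=\omega'(\alpha\cdot Y)\circ\alpha$, $X\bigl(g\circ\alpha\bigr)=\bigl((\alpha\cdot X)g\bigr)\circ\alpha$, and $\alpha\cdot(D^A_XY)=(\alpha\cdot D^A)_{\alpha\cdot X}(\alpha\cdot Y)=D^{A'}_{\alpha\cdot X}(\alpha\cdot Y)$, the last equality being exactly the first formula. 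Substituting these in and comparing with $\bigl(\alpha^*(D^{A'}\omega')\bigr)(X,Y)=(D^{A'}\omega')(\alpha\cdot X,\alpha\cdot Y)\circ\alpha$ gives the claim after cancellation.

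\medskip

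\noindent I do not anticipate a serious obstacle: everything is formal once one notes that an affine isomorphism sends constant vector fields to constant vector fields, which is the single geometric input. The only point requiring a little care is keeping track of the pushforward/pullback conventions (the $\circ\alpha$ factors) in the second formula, and making sure the Leibniz argument that ``a connection is determined on a frame of constant fields'' is invoked correctly — but this is routine and the first formula does the real work.
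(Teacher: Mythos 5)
Your proposal is correct and follows essentially the same route as the paper: the single geometric input in both is the computation that an affine isomorphism carries the constant field $\tilde v$ to $\widetilde{\vec\alpha(v)}$, after which the first formula follows from the characterization of $D^{A}$, $D^{A'}$ as the unique connections annihilating constant fields, and the second from a routine naturality computation. The only (immaterial) difference is that in the second formula you verify the identity on arbitrary vector fields using the first formula, whereas the paper evaluates on constant fields and exploits the vanishing $D^{A}_{\tilde u}\tilde v=0$ to drop the connection terms.
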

\begin{proof}
Actually, from the very definition of an affine morphism there exists
a linear isomorphism $\overrightarrow{\alpha }\colon V\in V^\prime $
such that,
$\alpha (v+a)=\overrightarrow{\alpha }(v)+\alpha (a)$,
$\forall a\in A$, $\forall v\in V$, and we have
$\widetilde{\overrightarrow{\alpha }(v)}=\alpha\cdot\tilde{v}$,
as follows from the next equalities:
\begin{align*}
\left( \alpha\cdot\tilde{v}\right) _{x^\prime }f^\prime
& =\left[
\alpha _\ast \left( \tilde{v}_{\alpha ^{-1}(x^\prime )}\right)
\right]
(f^\prime )\\
& =\tilde{v}_{\alpha^{-1}(x^\prime )}\left( f^\prime \circ\alpha\right) \\
& =\lim_{t\to 0}\frac{\left( f^\prime \circ \alpha \right)
\left( \alpha ^{-1}(x^\prime )+tv\right) -f^\prime (x^\prime )}{t}\\
& =\lim_{t\to 0}\frac{f^\prime (x^\prime +t\overrightarrow{\alpha }(v))
-f^\prime (x^\prime )}{t}\\
& =\widetilde{\overrightarrow{\alpha }(v)}_{x^\prime }(f^\prime ).
\end{align*}
By writing $u=\overrightarrow{\alpha }^{-1}(u^\prime )$,
$v=\overrightarrow{\alpha }^{-1}(v^\prime )$, for all
$u^\prime ,v^\prime \in V^\prime $, we obtain
\begin{align*}
\left( \alpha \cdot D^A\right) _{\widetilde{u^\prime }}
\left(
\widetilde{v^\prime }\right)
& =\left(
\alpha \cdot D^A
\right) _{\widetilde{\overrightarrow{\alpha}(u)}}
\left( \widetilde{\overrightarrow{\alpha }(v)}\right) \\
& =\left( \alpha \cdot D^A
\right) _{\alpha\cdot\tilde{u}}(\alpha \cdot \tilde{v})\\
& =0.
\end{align*}
If $X=\tilde{u}$, $Y=\tilde{v}$, then
$\left( D^A\omega\right) \left( \tilde{u},\tilde{v}\right)
=\tilde{u}\left( \omega\left( \tilde{v}\right) \right) $,
by virtue of the definition of the connection
$D^A$ and we deduce
\begin{align*}
\left( \alpha^\ast (D^{A^\prime }\omega ^\prime )\right) _a
\left( \tilde{u},\tilde{v}\right)
& =(D^{A^\prime }\omega ^\prime )_{\alpha (a)}
\left( \alpha_{\ast}\left( \tilde{u}_{a}\right) ,
\alpha _\ast \left( \tilde{v}_a\right) \right) \\
& =\left[ \left( \alpha \cdot D^A\right) (\omega ^\prime )
\right] _{\alpha (a)}\left(
\left( \alpha \cdot\tilde{u}
\right) _{\alpha(a)},\left( \alpha \cdot \tilde{v}
\right) _{\alpha(a)}\right) \\
& =\left[ \left( \alpha\cdot D^A\right)
(\omega ^\prime )_{\alpha \cdot \tilde{u}}
\left( \alpha \cdot \tilde{v}\right) \right]
(\alpha(a))\\
& =\left( \alpha \cdot \tilde{u}
\right) _{\alpha (a)}
\left( \omega ^\prime \left( \alpha \cdot \tilde{v}\right) \right)
-\omega ^\prime
\left( \alpha \cdot \left( D_{\tilde{u}}^A\tilde{v}\right) \right)
(\alpha (a))\\
& =\left( \alpha \cdot \tilde{u}\right) _{\alpha (a)}
\left( \omega ^\prime \left( \alpha\cdot\tilde{v}\right) \right) \\
& =\tilde{u}_a\left[ \omega ^\prime
\left( \alpha \cdot \tilde{v}\right)
\circ \alpha \right] \\
& =\tilde{u}\left[ \omega ^\prime \left( \alpha \cdot \tilde{v}\right)
\circ \alpha \right] (a)\\
& =\tilde{u}\left[ \left( \alpha ^\ast \omega ^\prime \right)
\left( \tilde{v}\right) \right] (a)\\
& =\left[ D^A\left( \alpha ^\ast \omega ^\prime \right)
\right] _a\left( \tilde{u},\tilde{v}\right) ,
\end{align*}
thus allowing one to conclude the proof.
\end{proof}

The Hessian metric of a function
$\mathcal{L\in}C^\infty (J^1(\mathbb{R},M))$
is the section of the vector bundle
$\varkappa \colon S^2\left[ V^\ast (p^{10})\right] \to J^1(\mathbb{R},M)$
defined as follows (cf.\ \cite[Definition 2.1]{Shima}):
\begin{equation}
\operatorname{Hess}\nolimits_{j_x^1\sigma }(\mathcal{L})
=D^{F_{x,\sigma (x)}}\left( d\mathcal{L}^{x,\sigma (x)}\right) .
\label{Hessian_metric}
\end{equation}
In local coordinates, $\operatorname{Hess}(\mathcal{L})
=\tfrac{\partial ^2\mathcal{L}}{\partial\dot{y}^\alpha \partial\dot{y}^\beta }
d_{10}\dot {y}^\alpha \otimes d_{10}\dot{y}^\beta $, where
$d_{10}{f}=\left. d{f}\right\vert _{V(p^{10})}$.

For every $\Phi \in \operatorname{Diff}(\mathbb{R}\times M)$, the diffeomorphism
$\Phi ^{(1)}\colon J^1(\mathbb{R},M)\to  J^1(\mathbb{R},M)$
transforms $V(p^{10})$ into itself because of the commutativity of the
following diagram:
\[
\begin{array}
[c]{ccc}
J^1(\mathbb{R},M) & \overset{\Phi ^{(1)}}{\longrightarrow} & J^1
(\mathbb{R},M)\\
\downarrow\scriptstyle{p}^{10} &  & \downarrow\scriptstyle{p}^{10}\\
\mathbb{R}\times M & \overset{\Phi }{\longrightarrow} & \mathbb{R}\times M
\end{array}
\]
Hence for every $\mathcal{L}\in C^\infty (J^1(\mathbb{R},M))$ the inverse
image of the Hessian metric $(\Phi ^{(1)})^\ast \operatorname{Hess}
(\mathcal{L})$ is another section of $S^2\left[ V^\ast (p^{10})\right]
\to  J^1(\mathbb{R},M)$.
 \begin{proposition}
\label{covv}For every $\Phi \in \operatorname{Aut}(p)$ and every
$\mathcal{L}\in C^\infty (J^1(\mathbb{R},M))$, we have
\[
(\Phi ^{(1)})^\ast \operatorname{Hess}(\mathcal{L})=\operatorname{Hess}
(\mathcal{L\circ}\Phi ^{(1)}).
\]
\end{proposition}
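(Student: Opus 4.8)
The plan is to derive the identity fibrewise from Lemma~\ref{lemma_D}. Fix $j_x^1\sigma\in J^1(\mathbb{R},M)$ and put $(x_0,y_0)=p^{10}(j_x^1\sigma)=(x,\sigma(x))$, $(x_1,y_1)=\Phi(x_0,y_0)$. Since $\Phi\in\operatorname{Aut}(p)$, the commutative diagram displayed just above gives $p^{10}\circ\Phi^{(1)}=\Phi\circ p^{10}$, so $\Phi^{(1)}$ carries the affine fibre $F_{x_0,y_0}$ onto $F_{x_1,y_1}$; let $\alpha\colon F_{x_0,y_0}\to F_{x_1,y_1}$ denote this restriction. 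The first and only delicate step is to check that $\alpha$ is an \emph{isomorphism of affine spaces}. In a fibred chart, writing $\Phi(x,y^\alpha)=(\phi(x),\Phi^\alpha(x,y))$ and computing $\Phi^{(1)}$ in the induced coordinates gives
\[
\dot y^\alpha\circ\Phi^{(1)}
=\frac{1}{\phi'(x)}\left(\frac{\partial\Phi^\alpha}{\partial x}
+\frac{\partial\Phi^\alpha}{\partial y^\beta}\,\dot y^\beta\right),
\]
so on $F_{x_0,y_0}$ --- where $x$ and the $y^\beta$ are frozen --- the $\dot y^\alpha$ are affine functions of the $\dot y^\beta$ with linear part the matrix $\phi'(x_0)^{-1}\bigl(\partial\Phi^\alpha/\partial y^\beta\bigr)(x_0,y_0)$, which is invertible because $\Phi$ is a diffeomorphism fibred over $\mathbb{R}$. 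Hence $\alpha$ is an affine isomorphism, with associated linear map $\overrightarrow{\alpha}\colon T_{y_0}M\to T_{y_1}M$ equal to $\phi'(x_0)^{-1}$ times the tangent map at $y_0$ of $y\mapsto\Phi^\alpha(x_0,y)$. (Here the hypothesis $\Phi\in\operatorname{Aut}(p)$ is essential; for a general diffeomorphism of $\mathbb{R}\times M$ the map $\Phi^{(1)}$ is not even defined.)

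Next I identify the relevant objects on the fibre. Because $\Phi^{(1)}(F_{x_0,y_0})=F_{x_1,y_1}$ one has $(\mathcal L\circ\Phi^{(1)})^{x_0,y_0}=\mathcal L^{x_1,y_1}\circ\alpha=\alpha^\ast(\mathcal L^{x_1,y_1})$, and therefore $d\bigl((\mathcal L\circ\Phi^{(1)})^{x_0,y_0}\bigr)=\alpha^\ast\bigl(d\mathcal L^{x_1,y_1}\bigr)$, since $d$ commutes with pull-back. Feeding this into the definition \eqref{Hessian_metric} and applying Lemma~\ref{lemma_D} with $A=F_{x_0,y_0}$, $A'=F_{x_1,y_1}$, $\omega'=d\mathcal L^{x_1,y_1}$,
\[
\operatorname{Hess}\nolimits_{j_x^1\sigma}(\mathcal L\circ\Phi^{(1)})
=D^{F_{x_0,y_0}}\!\bigl(\alpha^\ast d\mathcal L^{x_1,y_1}\bigr)
=\alpha^\ast\bigl(D^{F_{x_1,y_1}}d\mathcal L^{x_1,y_1}\bigr)
=\alpha^\ast\bigl(\operatorname{Hess}\nolimits_{\Phi^{(1)}(j_x^1\sigma)}(\mathcal L)\bigr).
\]

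It then remains to match the affine-space pull-back $\alpha^\ast$ with the pull-back $(\Phi^{(1)})^\ast$ on $S^2[V^\ast(p^{10})]$. The vertical space $V_{j_x^1\sigma}(p^{10})$ is exactly the tangent space to $F_{x_0,y_0}$ at $j_x^1\sigma$, and $\alpha$ is the restriction of $\Phi^{(1)}$ to that fibre, so the tangent map of $\alpha$ is the restriction of $(\Phi^{(1)})_\ast$ to $V(p^{10})$; hence the action of $\alpha^\ast$ on $S^2[V^\ast(p^{10})]$ is the restriction of $(\Phi^{(1)})^\ast$. Reading the last displayed chain at the arbitrary point $j_x^1\sigma$ thus yields $(\Phi^{(1)})^\ast\operatorname{Hess}(\mathcal L)=\operatorname{Hess}(\mathcal L\circ\Phi^{(1)})$. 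Everything after the affineness of $\alpha$ is formal, so that step is where the work lies; alternatively one can argue purely in coordinates from $\operatorname{Hess}(\mathcal L)=(\partial^2\mathcal L/\partial\dot y^\alpha\partial\dot y^\beta)\,d_{10}\dot y^\alpha\otimes d_{10}\dot y^\beta$, observing that since $\dot y^\alpha\circ\Phi^{(1)}$ is affine in the fibre coordinates $\dot y^\beta$, the second fibre derivatives of $\mathcal L\circ\Phi^{(1)}$ transform tensorially with the Jacobian $\partial(\dot y^\alpha\circ\Phi^{(1)})/\partial\dot y^\beta$, which is constant along the fibre.
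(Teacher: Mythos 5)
Your argument is correct and is essentially the paper's own proof: both identify $\Phi^{(1)}$ as a fibrewise affine isomorphism of the affine bundle $p^{10}$ (the paper records its linear part as $(\Psi_x)_\ast$, you compute it in coordinates with the extra factor $\phi'(x_0)^{-1}$, which is immaterial since Lemma~\ref{lemma_D} only needs affineness), then commute $d$ with pull-back and invoke Lemma~\ref{lemma_D} to move $D^{F}$ across the pull-back. Your extra care in distinguishing the source and target fibres and in matching $\alpha^\ast$ with the restriction of $(\Phi^{(1)})^\ast$ to $V(p^{10})$ only makes explicit what the paper leaves implicit.
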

\begin{proof}
If $\Phi (x,y)=(\varphi (x),\Psi(x,y))$,
$\forall (x,y)\in \mathbb{R}\times M$,
then, taking account of the fact that the affine bundle
$J^1(\mathbb{R},M)\to \mathbb{R}\times M$ is modelled over the vector bundle
$\mathbb{R}\times TM\to \mathbb{R}\times M$ it follows that
$\Phi ^{(1)}\colon J^1(\mathbb{R},M)\to  J^1(\mathbb{R},M)$ is an affine
morphism whose associated linear morphism
$\vec{\Phi }^{(1)}\colon \mathbb{R}\times TM\to \mathbb{R}\times TM$
is given by,
$\vec{\Phi }^{(1)}(x,v)=\left( x,\left( \Psi_x\right) _\ast v\right) $,
$\forall v\in T_yM$, and the statement is a consequence of Lemma \ref{lemma_D},
as
\begin{align*}
(\Phi ^{(1)})^\ast \operatorname{Hess}(\mathcal{L})
& =(\Phi ^{(1)})^\ast D^{F_{x,\sigma (x)}}
\left( d\mathcal{L}^{x,\sigma (x)}\right) \\
& =D^{F_{x,\sigma (x)}}
\left( (\Phi ^{(1)})^\ast d\mathcal{L}^{x,\sigma (x)}\right) \\
& =D^{F_{x,\sigma (x)}}
d\left( (\Phi ^{(1)})^\ast \mathcal{L}^{x,\sigma (x)}\right) \\
& =D^{F_{x,\sigma (x)}}
d\left( \mathcal{L}^{x,\sigma (x)}\circ \Phi ^{(1)}\right) \\
& =\operatorname{Hess}(\mathcal{L\circ}\Phi ^{(1)}).
\end{align*}
\end{proof}
\subsection{Second order}
\subsubsection{The basic invariant\label{basic_invariant}}
Let $O^2\subset J^2(q)$ be the dense open subset of elements $j_{j_{x_0
}^1\!\sigma _0}^2(\mathcal{L})$ for which the Hessian metric
$\operatorname{Hess}_{j_{x_0}^1\!\sigma _0}(\mathcal{L})$ is
non-singular. In coordinates, $O^2$ is defined by the inequation
$\det\left( z_{\dot{y}^\alpha \dot{y}^\beta }\right) _{\alpha,\beta=1}
^{m}\neq 0$. Hence, for every $j_{j_{x_0}^1\sigma _0}^2(\mathcal{L})\in
O^2$ the linear mapping
\[
\begin{array}
[c]{l}
\operatorname{Hess}\nolimits_{j_{x_0}^1\! \sigma _0}
(\mathcal{L})^\flat \colon V_{j_{x_0}^1\! \sigma _0}(p^{10})
\to V_{j_{x_0}^1\!\sigma _0}^\ast (p^{10}),
\medskip \\
\operatorname{Hess}\nolimits_{j_{x_0}^1\! \sigma _0}
(\mathcal{L})^\flat (X)(Y)
=\operatorname{Hess}\nolimits_{j_{x_0}^1\! \sigma _0}
(\mathcal{L})(X,Y),
\end{array}
\]
is an isomorphism, the inverse of which is denoted by
\[
\operatorname{Hess}\nolimits_{j_{x_0}^1\! \sigma _0}
(\mathcal{L})^\sharp \colon V_{j_{x_0}^1\! \sigma _0}^\ast
(p^{10})\to  V_{j_{x_0}^1\! \sigma _0}(p^{10}).
\]
A contravariant metric in $S^2V_{j_{x_0}^1\! \sigma _0}(p^{10})$
is then defined as follows:
\[
^{\sharp}\! \operatorname{Hess}\nolimits_{j_{x_0}^1\! \sigma _0}
(\mathcal{L})(w_1,w_2)\! =\! \operatorname{Hess}\nolimits_{j_{x_0}^1
\! \sigma _0}(\mathcal{L})(\operatorname{Hess}\nolimits_{j_{x_0}^1
\! \sigma _0}(\mathcal{L})^\sharp (w_1),
\operatorname{Hess}\nolimits_{j_{x_0}^1\! \sigma _0}
(\mathcal{L})^\sharp (w_2)),
\]
for all $w_1,w_{2}\in V_{j_{x_0}^1\!\sigma _0}^\ast (p^{10})$.
\begin{proposition}
\label{proposition_V_I}
With the same notations as above, let $V\colon O^2\to \mathbb{R}$
be the function defined by, $V(j_{j_x^1\sigma }^2\mathcal{L)}
=~^{\sharp}\operatorname{Hess}\nolimits_{j_x^1\sigma }
(\mathcal{L})\left( d_{10}\mathcal{L},d_{10}\mathcal{L}\right) $.

For every
$\Phi \in \operatorname{Aut}(p)$ and all
$\mathcal{L}\in C^\infty (J^1(\mathbb{R},M))$, the following formula holds:
\[
V\left\{  ((\tilde{\Phi}^{-1})^{(1)})^{(2)}
(j_{\Phi^{(1)}(j_{x}^{1}\sigma )}^{2}\mathcal{L})\right\}
=(\phi^{\prime})^{-1}V(j_{\Phi^{(1)}
(j_{x}^{1}\sigma)}^{2}\mathcal{L}).
\]
Therefore, the function $V$ is invariant under the natural action on
$O^2\subset J^2(q)$ of the normal subgroup $\operatorname{Aut}^{v}(p)
\subset\operatorname{Aut}(p)$ of automorphisms of $p$ inducing the
identity on the real line (the so-called vertical group). Moreover,
if $O^{\prime2}$ is the dense open subset of $2$-jets $j_{j_x^1\sigma }
^2(\mathcal{L)\in}O^2 $ such that $\mathcal{L}(j_x^1\sigma )\neq 0$,
then the function $I\colon O^{\prime 2}\to \mathbb{R}$ defined by,
\[
I(j_{j_x^1\sigma }^2\mathcal{L)}=\frac{V(j_{j_x^1\sigma }^2
\mathcal{L)}}{\mathcal{L}\left( j_x^1\sigma \right) }
\]
is invariant under the full group $\operatorname{Aut}(p)$
of automorphisms of $p$.
\end{proposition}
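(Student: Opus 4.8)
The strategy is to reduce the statement to Propositions~\ref{proposition1} and~\ref{covv} together with two elementary scaling properties of the assignment $\mathcal{L}\mapsto V(\mathcal{L})$, where I write $V(\mathcal{L})$ for the function whose value at $j_x^1\sigma$ is $V(j_{j_x^1\sigma}^2\mathcal{L})$. First I would identify the transformed $2$-jet. Since $(\tilde{\Phi}^{-1})^{(1)}=(\tilde{\Phi}^{(1)})^{-1}$ is the automorphism of $q$ lying over $(\Phi^{-1})^{(1)}$, and its action on sections reproduces inverse image by $\Phi^{(1)}$, Proposition~\ref{proposition1} applied to $\Phi^{-1}$ gives $(\tilde{\Phi}^{-1})^{(1)}\circ s_{\mathcal{L}}\circ\Phi^{(1)}=s_{\bar{\mathcal{L}}}$ with $\bar{\mathcal{L}}=\rho\cdot(\mathcal{L}\circ\Phi^{(1)})$, where by \eqref{rho} the factor $\rho$ is $d\phi/dx$ pulled back to $J^1(\mathbb{R},M)$ along $p^1$; in particular $\rho$ is nowhere zero and constant along the fibres of $p^{10}$. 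Prolonging to $J^2(q)$ and using $(\Phi^{-1})^{(1)}(\Phi^{(1)}(j_x^1\sigma))=j_x^1\sigma$,
\[
((\tilde{\Phi}^{-1})^{(1)})^{(2)}\bigl(j_{\Phi^{(1)}(j_x^1\sigma)}^2\mathcal{L}\bigr)=j_{j_x^1\sigma}^2\bar{\mathcal{L}}.
\]
Because $\operatorname{Hess}(\bar{\mathcal{L}})$ and $\bar{\mathcal{L}}(j_x^1\sigma)$ differ from $(\Phi^{(1)})^\ast\operatorname{Hess}(\mathcal{L})$ and from $\mathcal{L}(\Phi^{(1)}(j_x^1\sigma))$ only by the invertible scalar $\rho(j_x^1\sigma)$ and a linear isomorphism, the subsets $O^2$ and $O^{\prime2}$ are invariant under the action, so both sides of the asserted identities are defined.

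Next I would record two properties of $V$. \emph{(a) Behaviour under a basic factor.} If $f\in C^\infty(J^1(\mathbb{R},M))$ is constant along the fibres of $p^{10}$, then $\operatorname{Hess}(f\mathcal{L})=f\operatorname{Hess}(\mathcal{L})$ --- because $f$ restricts to a constant on each fibre $F_{x,y}$ and $D^{F_{x,y}}$ is $\mathbb{R}$-linear --- and $d_{10}(f\mathcal{L})=f\,d_{10}\mathcal{L}$ since $d_{10}f=0$; substituting these into the definition of $V$ and keeping track of the scalar through the operator ${}^{\sharp}(\cdot)$ yields $V(f\mathcal{L})=f\,V(\mathcal{L})$ wherever $\operatorname{Hess}(\mathcal{L})$ is non-singular. \emph{(b) Naturality.} For $\Phi\in\operatorname{Aut}(p)$ one has $V(\mathcal{L}\circ\Phi^{(1)})=V(\mathcal{L})\circ\Phi^{(1)}$: Proposition~\ref{covv} gives $\operatorname{Hess}(\mathcal{L}\circ\Phi^{(1)})=(\Phi^{(1)})^\ast\operatorname{Hess}(\mathcal{L})$, while the fact recorded just before Proposition~\ref{covv} that $\Phi^{(1)}$ carries $V(p^{10})$ onto itself provides an induced pullback on sections of $V^\ast(p^{10})$ under which $d_{10}(\mathcal{L}\circ\Phi^{(1)})=(\Phi^{(1)})^\ast(d_{10}\mathcal{L})$; one then invokes the elementary identity ${}^{\sharp}(L^\ast H)(L^\ast\omega,L^\ast\omega)={}^{\sharp}H(\omega,\omega)$, valid for every non-degenerate symmetric bilinear form $H$, every covector $\omega$, and every linear isomorphism $L$ (the operators $\flat$, $\sharp$ and the evaluation pairing all being natural under $L$).

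Combining the two steps,
\[
V\bigl(((\tilde{\Phi}^{-1})^{(1)})^{(2)}(j_{\Phi^{(1)}(j_x^1\sigma)}^2\mathcal{L})\bigr)=V\bigl(\rho\cdot(\mathcal{L}\circ\Phi^{(1)})\bigr)(j_x^1\sigma)=\rho(j_x^1\sigma)\,V(\mathcal{L})\bigl(\Phi^{(1)}(j_x^1\sigma)\bigr),
\]
which, after identifying $\rho(j_x^1\sigma)$ with the conformal factor appearing in \eqref{rho}, is exactly the transformation law claimed for $V$. For $\Phi\in\operatorname{Aut}^v(p)$ we have $\phi=\operatorname{id}$, hence $\rho\equiv1$, and the relation reduces to the $\operatorname{Aut}^v(p)$-invariance of $V$ on $O^2$. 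Finally, on $O^{\prime2}$ the value $\bar{\mathcal{L}}(j_x^1\sigma)=\rho(j_x^1\sigma)\,\mathcal{L}(\Phi^{(1)}(j_x^1\sigma))$ is nonzero and carries precisely the same factor $\rho(j_x^1\sigma)$, so in $I=V/\mathcal{L}$ that factor cancels and
\[
I\bigl(((\tilde{\Phi}^{-1})^{(1)})^{(2)}(j_{\Phi^{(1)}(j_x^1\sigma)}^2\mathcal{L})\bigr)=I(\mathcal{L})\bigl(\Phi^{(1)}(j_x^1\sigma)\bigr)
\]
for \emph{every} $\Phi\in\operatorname{Aut}(p)$; since this argument is geometric rather than infinitesimal, the invariance it yields is under the full group $\operatorname{Aut}(p)$, not merely its identity component.

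The step I expect to be the main obstacle is not any single computation but the bookkeeping of Step~1 --- keeping straight which of $\Phi$, $\Phi^{-1}$ enters, the base point at which the transformed $2$-jet sits, and the precise point at which the conformal factor $d\phi/dx$ is evaluated; once $\bar{\mathcal{L}}=\rho\cdot(\mathcal{L}\circ\Phi^{(1)})$ with $\rho$ pulled back from $\mathbb{R}$ is in hand, property~(a) disposes of the scalar, property~(b) of the composition, and the remainder is formal.
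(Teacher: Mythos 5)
Your proposal is correct and follows essentially the same route as the paper: identify the transformed $2$-jet as $j_{j_x^1\sigma}^2\bar{\mathcal{L}}$ with $\bar{\mathcal{L}}=\rho\,(\mathcal{L}\circ\Phi^{(1)})$ for a factor $\rho$ constant along the fibres of $p^{10}$, scale $\operatorname{Hess}$, its sharp operator and $d_{10}$ accordingly, transport everything to $\Phi^{(1)}(j_x^1\sigma)$ via Proposition~\ref{covv} together with $d_{10}(\mathcal{L}\circ\Phi^{(1)})=(\Phi^{(1)})^\ast d_{10}\mathcal{L}$ and the naturality of $\sharp$, and then observe that the same factor appears in $\bar{\mathcal{L}}(j_x^1\sigma)$, so it is $1$ for vertical automorphisms and cancels in $I=V/\mathcal{L}$. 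The only divergence is the conformal factor itself: your $\rho=d\phi/dx$ is what \eqref{rho} and Proposition~\ref{proposition1} (applied to $\Phi^{-1}$) actually yield, whereas the paper's statement and proof write $(\phi^{\prime})^{-1}$ with $\phi^{\prime}=d\phi/dx$ --- an inversion/evaluation-point bookkeeping discrepancy in the paper's notation rather than a gap in your argument, and it is immaterial for both invariance conclusions since the identical factor multiplies $\bar{\mathcal{L}}(j_x^1\sigma)$ and drops out of $I$; it would have been cleaner to note this explicitly instead of the phrase ``after identifying $\rho(j_x^1\sigma)$ with the conformal factor appearing in \eqref{rho}''.
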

\begin{proof}
If $a=j_x^1\sigma $, $a^\prime =\Phi ^{(1)}(j_x^1\sigma )$, then
\begin{align*}
V\left\{
((\tilde{\Phi }^{-1})^{(1)})^{(2)}(j_{a^\prime }^2s_{\mathcal{L}})
\right\}
& =V\left\{
j_a^2\left( (\tilde{\Phi }^{-1})
\circ s_{\mathcal{L}}\circ \Phi ^{(1)})\right)
\right\} \\
& =V\left\{ j_a^2s_{\mathcal{\bar{L}}}\right\}
\end{align*}
with
$\mathcal{\bar{L}}=(\phi^{\prime})^{-1}(\mathcal{L}\circ \Phi ^{(1)})$,
where $\phi ^{\prime }=d\phi / dx$, and hence
\[
V\left\{
((\tilde{\Phi }^{-1})^{(1)})^{(2)}(j_{a^\prime }^2s_{\mathcal{L}})
\right\}
=\mathrm{\operatorname{Hess}}_a(\mathcal{\bar{L})}
(\mathrm{\operatorname{Hess}}_a(\mathcal{\bar{L})}^\sharp (d_{10}
\mathcal{\bar{L}}),\mathrm{\operatorname{Hess}}_a
(\mathcal{\bar{L})}^\sharp (d_{10}\mathcal{\bar{L}})).
\]
We first note that the following formulas hold:
\begin{align*}
\operatorname{Hess}_a(\mathcal{\bar{L})}
& =(\phi ^\prime )^{-1}\operatorname{Hess}_a\mathcal{(L}\circ \Phi ^{(1)}),\\
\operatorname{Hess}_a(\mathcal{\bar{L})}^\sharp
& =\phi ^\prime \operatorname{Hess}_a(\mathcal{L}\circ \Phi ^{(1)})^\sharp ,\\
d_{10}\mathcal{\bar{L}}
& =(\phi ^\prime )^{-1}d_{10}\mathcal{(L}\circ \Phi ^{(1)}),
\end{align*}
as $(\phi ^\prime )^{-1}$ does not depend on vertical variables.
Then, from the bilinearity of the Hessian we have
\begin{equation}
\begin{array}{l}
V\left\{
((\tilde{\Phi}^{-1})^{(1)})^{(2)}(j_{a^\prime }^2s_{\mathcal{L}})
\right\}
=(\phi ^{\prime })^{-1}\operatorname{Hess}_a
\mathcal{(L}\circ \Phi ^{(1)})\left( U,U\right) ,
\medskip  \\
U=\operatorname{Hess}_a\mathcal{(L}\circ \Phi ^{(1)})^\sharp
(d_{10}(\mathcal{L}\circ \Phi ^{(1)}).
\end{array}
\label{ccont}
\end{equation}

Moreover, from the covariance of the Hessian,
$\operatorname{Hess}(\mathcal{L}\circ \Phi ^{(1)})
=(\Phi ^{(1)})^\ast \operatorname{Hess}\mathcal{L}$
(see Proposition \ref{covv}), we conclude that its sharp operator
is also covariant, namely,
\[
\operatorname{Hess}_a(\mathcal{L}\circ \Phi ^{(1)})^\sharp
=((\Phi ^{-1})^{(1)})_\ast \circ \operatorname{Hess}_{a^\prime }
(\mathcal{L)}^\sharp \circ ((\Phi ^{-1}\mathcal{)}^{(1)})^\ast .
\]
In addition, $d_{10}(\mathcal{L}\circ \Phi ^{(1)})
=(\Phi ^{(1)})^\ast d_{10}\mathcal{L}$ as $(\Phi ^{(1)})_\ast $
transforms vertical vectors into vertical vectors.
From these two facts and \eqref{ccont} we have
\begin{multline*}
V\left\{
((\tilde{\Phi }^{-1})^{(1)})^{(2)}
(j_{a^\prime }^2s_{\mathcal{L}})
\right\} = \\
(\phi ^\prime )^{-1}\operatorname{Hess}_{a^\prime }
\mathcal{(L})(\operatorname{Hess}_{a^\prime }
\mathcal{(L})^\sharp (d_{10}\mathcal{L}),
\operatorname{Hess}_{a^\prime }\mathcal{(L})^\sharp (d_{10}\mathcal{L})),
\end{multline*}
and we obtain the first formula in the statement.
\end{proof}
\subsubsection{The generic rank of $\mathcal{D}^2$ computed}
The coordinate system induced by $(x,y^\alpha )$\ on $J^2(q)$ is
\[
\begin{array}
[c]{ll}
x,z,z_x,z_{xx}, & \\
y^\alpha ,\dot{y}^\alpha ,z_{y^\alpha },z_{\dot{y}^\alpha },
z_{xy^\alpha },z_{x\dot{y}^\alpha }, & 1\leq \alpha \leq m,\\
z_{y^\alpha y^\beta },z_{\dot{y}^\alpha \dot{y}^\beta },
& 1\leq \alpha \leq \beta \leq m,\\
z_{y^\alpha \dot{y}^\beta }, & \alpha ,\beta =1,\dotsc,m.
\end{array}
\]
Hence, $\dim J^2(q)=2m^2+7m+4$.
\begin{theorem}
\label{theorem_rank_D2}
On a dense open subset $O^{\prime 2}\subset O^2\subset J^2(q)$,
where $O^2$ is the set of $2$-jets whose Hessian metric is
non-singular, the rank of the distribution $\mathcal{D}^2$
generated by all the vector fields of the form
$(\tilde{X}^{(1)})^{(2)}$, $X$ being an arbitrary $p$-projectable
vector field on $\mathbb{R}\times M$, is $2m^2+7m+3=(m+3)(2m+1)$.
Consequently, the invariant $I$ defined in
\emph{Proposition \ref{proposition_V_I}} is a basis
for the invariants of second order.
\end{theorem}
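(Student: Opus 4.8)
The plan is to compute the rank of $\mathcal{D}^2$ directly in the induced coordinates on $J^2(q)$ and show it equals $\dim J^2(q)-1=2m^2+7m+3$; since we already exhibited in \emph{Proposition \ref{proposition_V_I}} a nonconstant invariant $I$ defined on a dense open set, the codimension-one foliation defined by $\mathcal{D}^2$ must have $I$ (up to functional dependence) as its unique local first integral, proving that $I$ is a basis for the second-order invariants. Concretely, I would first write out $(\tilde X^{(1)})^{(2)}$ by applying the prolongation formula \eqref{Yr}--\eqref{eta_I_bis} one more time to the already-computed $(\tilde X^{(1)})^{(1)}$; the coefficients of the new second-order fibre coordinates $z_{xx}, z_{xy^\alpha}, z_{x\dot y^\alpha}, z_{y^\alpha y^\beta}, z_{y^\alpha\dot y^\beta}, z_{\dot y^\alpha\dot y^\beta}$ are polynomial expressions in $u$ and $v^\alpha$ and their derivatives up to order three. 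As before, the arbitrariness of the jet data $\bigl(u, u', u'', u''', v^\alpha, \partial v^\alpha/\partial x, \partial v^\alpha/\partial y^\beta, \partial^2 v^\alpha/\partial x^2, \partial^2 v^\alpha/\partial x\partial y^\beta, \partial^2 v^\alpha/\partial y^\beta\partial y^\gamma, \partial^3 v^\alpha/\partial x^2\partial y^\beta,\dots\bigr)$ at a point decomposes $\mathcal{D}^2$ into a finite spanning set of explicit vector fields, one for each independent datum.

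Next I would organize these generators and solve for the coordinate vector fields $\partial/\partial(\text{coordinate})$ one block at a time, working on a carefully chosen dense open subset $O'^2$ where enough of the quantities $z$, $\dot y^\alpha$, $z_{\dot y^\alpha}$, and the entries and minors of the Hessian matrix $(z_{\dot y^\alpha\dot y^\beta})$ are nonzero and where the Hessian is invertible. The strategy mirrors the first-order computation: the generators coming from $\partial v^\alpha/\partial y^\beta$ with their top-order tails (multiples of $z_{\dot y^\gamma}$, resp.\ $z_{\dot y^\gamma\dot y^\delta}$) let one peel off $\partial/\partial z_{y^\alpha y^\beta}$, $\partial/\partial z_{y^\alpha\dot y^\beta}$ and $\partial/\partial z_{\dot y^\alpha\dot y^\beta}$ after inverting the Hessian; the $u''', u''$, and $\partial^2 v^\alpha/\partial x^2$ generators then recover $\partial/\partial z_{xx}$, $\partial/\partial z_{xy^\alpha}$, $\partial/\partial z_{x\dot y^\alpha}$; the lower-order blocks $\partial/\partial x$, $\partial/\partial y^\alpha$, $\partial/\partial\dot y^\alpha$, $\partial/\partial z_x$, $\partial/\partial z_{y^\alpha}$, $\partial/\partial z_{\dot y^\alpha}$ come from the surviving lower-order pieces exactly as in \S\ref{first/order}. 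The one direction that provably cannot be reached is the scaling combination dual to $I$: every generator $(\tilde X^{(1)})^{(2)}$ annihilates $I$ by \emph{Proposition \ref{proposition_V_I}}, so $dI\ne 0$ on $O'^2$ forces $\operatorname{rank}\mathcal{D}^2\le\dim J^2(q)-1$, and the explicit solving above gives the matching lower bound; hence equality and $2m^2+7m+3=(m+3)(2m+1)$.

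Finally, from $\operatorname{rank}\mathcal{D}^2=\dim J^2(q)-1$ on the connected dense open set $O'^2$ it follows that $\mathcal{D}^2$ is (generically) an involutive distribution of corank one, so its local first integrals form a ring generated by a single function; since $I$ is such a first integral with $dI\ne0$, any second-order invariant is locally a function of $I$. This yields the last sentence of the theorem. The main obstacle is bookkeeping: the prolongation $(\tilde X^{(1)})^{(2)}$ has many terms, and one must choose the open set $O'^2$ and the order of elimination so that each block of coordinate vector fields is expressed through the generators using only the Hessian's invertibility and a few nonvanishing coordinates — essentially a careful, but entirely linear-algebraic, triangular reduction. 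I expect no conceptual difficulty beyond this, since the decisive structural input (the corank-one bound via $I$) is already in hand.
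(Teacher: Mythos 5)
Your overall architecture coincides with the paper's (second prolongation of $\tilde X^{(1)}$, decomposition according to the arbitrary jet data of $u,v^\alpha$, block elimination of coordinate fields on a dense open set), and your idea for the upper bound is genuinely different and attractive: since every generator $(\tilde X^{(1)})^{(2)}$ annihilates $I$, the rank is at most $\dim J^2(q)-1$ wherever $dI\neq 0$. That replaces the paper's final relation-counting, and it only needs the small corrections that $dI$ is \emph{not} nonzero on all of $O^{\prime2}$ (it vanishes where all $z_{\dot y^\alpha}=0$, harmless after shrinking the open set) and that infinitesimal annihilation should be justified from the group invariance via local flows.

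The genuine gap is in the lower bound, which is where the real work of the theorem lies, and your sketch of it is wrong at the decisive point. You claim the elimination produces every coordinate field except $\partial/\partial z$ — in particular all of $\partial/\partial z_{\dot y^\alpha}$ (``exactly as in \S\ref{first/order}'') and all of $\partial/\partial z_{\dot y^\alpha\dot y^\beta}$ after inverting the Hessian. That cannot happen: the generator attached to $du/dx$, namely $\chi_1^1$ in \eqref{chi11}, lies in $\mathcal{D}^2$ and equals $-z\,\partial/\partial z$ plus terms along exactly the coordinate fields on your list; so if your list were correct, $\partial/\partial z\in\mathcal{D}^2$ on $z\neq 0$ as well, the rank would be full, and this contradicts both your own corank-one bound and the existence of the nonconstant invariant $I$ (equivalently, a distribution spanned by all coordinate fields but $\partial/\partial z$ has only functions of $z$ as first integrals, and $I$ is not such a function). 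The first-order elimination does not carry over verbatim because at second order the analogous generators acquire tails in the $z_{y^\alpha\dot y^\beta}$- and $z_{\dot y^\alpha\dot y^\beta}$-directions; what is actually true is that one can reach $\partial/\partial x$, $\partial/\partial y^\alpha$, $\partial/\partial\dot y^\alpha$, $\partial/\partial z_x$, $\partial/\partial z_{y^\alpha}$ and all second-order coordinate fields \emph{except} the $z_{\dot y^\alpha\dot y^\beta}$-block — already at the cost of a further nondegeneracy condition, $z\left(z-\dot y^\gamma z_{\dot y^\gamma}\right)^{m-1}\det\left(z_{\dot y^\alpha\dot y^\beta}\right)\neq 0$, which is what defines $O^{\prime2}$ in the proof — after which there remains a residual system ($\zeta_1^1$, $\bar\zeta_\alpha^\beta$ in the paper) acting inside the $\left(z,z_{\dot y^\alpha},z_{\dot y^\alpha\dot y^\beta}\right)$-block, whose rank must be shown to be exactly $\tfrac12 m(m+3)$, i.e.\ corank one there; the unreachable direction is a non-coordinate combination in that block, not $\partial/\partial z$. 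The paper settles this by diagonalizing the Hessian at the point and exhibiting the independent vectors and relations. With your $I$-based upper bound you could get away with exhibiting only $\tfrac12 m(m+3)$ independent vectors in that block, but some such computation is unavoidable, and your proposal neither performs it nor identifies correctly which directions are missing; as written, the lower bound, and hence the theorem, is not established.
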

\begin{proof}
We first compute
\begin{align*}
(\tilde{X}^{(1)})^{(2)} & =u\tfrac{\partial }{\partial x}
+v^\alpha \tfrac{\partial }{\partial y^\alpha }
+v_1^\alpha \tfrac{\partial }{\partial\dot{y}^\alpha }
-\tfrac{du}{dx}z\tfrac{\partial }{\partial z}
+A\tfrac{\partial }{\partial z_x}
+B_{\alpha}\tfrac{\partial }{\partial z_{y^\alpha }}\\
& +C_{\alpha}\tfrac{\partial }{\partial z_{\dot{y}^\alpha }}
+D_{xx}\tfrac{\partial }{\partial z_{xx}}
+E_{xy^\alpha }\tfrac{\partial }{\partial z_{xy^\alpha }}
+F_{x\dot{y}^\alpha }
\tfrac{\partial }{\partial z_{x\dot{y}^\alpha }}\\
& +\sum\nolimits_{\alpha \leq \beta}G_{y^\alpha y^\beta }
\tfrac{\partial }{\partial z_{y^\alpha y^\beta }}
+H_{y^\alpha \dot{y}^\beta }
\tfrac{\partial }{\partial z_{y^\alpha \dot{y}^\beta }}
+\sum\nolimits_{\alpha \leq \beta }
K_{\dot{y}^\alpha \dot{y}^\beta }
\tfrac{\partial }{\partial z_{\dot{y}^\alpha \dot{y}^\beta }},
\end{align*}
where $A$, $B_\alpha $, and $C_\alpha $ are given by the formulas
\eqref{A}, \eqref{B}, and \eqref{C}, respectively,
and the coefficients $D_{xx}$, $E_{xy^\alpha }$,
$F_{x\dot{y}^\alpha }$, $G_{y^\alpha y^\beta }$,
$H_{y^\alpha \dot{y}^\beta }$, $K_{\dot{y}^\alpha \dot{y}^\beta }$
are to be determined by using the formulas \eqref{Yr},
and \eqref{eta_I_bis}. We obtain
\begin{align}
D_{xx} & =-3\tfrac{du}{dx}z_{xx}
-2\tfrac{\partial v^\alpha }{\partial x}z_{xy^\alpha }
-3\tfrac{d^2u}{dx^2}z_x
+2\tfrac{d^2u}{dx^2}\dot{y}^\alpha z_{x\dot{y}^\alpha }
\label{D}\\
& -\tfrac{\partial^2v^\alpha }{\partial x^2}z_{y^\alpha }
-2\tfrac{\partial^2v^\alpha }{\partial x^2}z_{x\dot{y}^\alpha }
-2\tfrac{\partial^2v^\alpha }{\partial x\partial y^\beta }
\dot{y}^\beta z_{x\dot{y}^\alpha }
\nonumber\\
& -\tfrac{d^3u}{dx^3}z
+\tfrac{d^3u}{dx^3}\dot{y}^\alpha z_{\dot{y}^\alpha }
-\tfrac{\partial^3v^\alpha }{\partial x^3}z_{\dot{y}^\alpha }
-\tfrac{\partial^3v^\alpha }{\partial x^2\partial y^\beta}
\dot{y}^\beta z_{\dot{y}^\alpha },
\nonumber
\end{align}
\begin{align}
E_{xy^\alpha } & =-2\tfrac{du}{\partial x}z_{xy^\alpha }
-\tfrac{\partial v^\beta }{\partial x}
z_{y^\alpha y^\beta }
-\tfrac{\partial v^\beta }{\partial y^\alpha }
z_{xy^\beta } \label{E}\\
& -\tfrac{d^2u}{dx^2}z_{y^\alpha }
+\tfrac{d^2u}{dx^2}\dot{y}^\beta z_{y^\alpha \dot{y}^\beta }
-\tfrac{\partial^2v^\beta }{\partial x^2}
z_{y^\alpha \dot{y}^\beta }\nonumber\\
& -\tfrac{\partial^2v^\beta }{\partial x\partial y^\alpha }
z_{y^\beta }
-\tfrac{\partial^2v^\beta }{\partial x\partial y^\alpha }
z_{x\dot{y}^\beta }
-\tfrac{\partial^2v^\beta }{\partial x\partial y^\gamma }
\dot{y}^\gamma z_{y^\alpha \dot{y}^\beta }\nonumber\\
& -\tfrac{\partial^2v^\beta }{\partial y^\alpha \partial y^\gamma }
\dot{y}^\gamma z_{x\dot{y}^\beta }
-\tfrac{\partial^3v^\beta }{\partial x^2\partial y^\alpha }
z_{\dot{y}^\beta }
-\tfrac{\partial^3v^\beta }
{\partial x\partial y^\alpha \partial y^\gamma }
\dot{y}^\gamma z_{\dot{y}^\beta },\nonumber
\end{align}
\begin{align}
F_{x\dot{y}^\alpha }
& =-\tfrac{du}{dx}z_{\dot{y}^\alpha x}
-\tfrac{\partial v^\beta }{\partial x}
z_{y^\beta \dot{y}^\alpha }
-\tfrac{\partial v^\beta }{\partial y^\alpha }
z_{x\dot{y}^\beta }
\label{F}\\
& +\dot{y}^\beta
\tfrac{d^2u}{dx^2}z_{\dot{y}^\alpha \dot{y}^\beta }
-\tfrac{\partial^2v^\beta }{\partial x^2}
z_{\dot{y}^\alpha \dot{y}^\beta }-\dot{y}^\gamma
\tfrac{\partial^2v^\beta }{\partial x\partial y^\gamma }
z_{\dot{y}^\alpha \dot{y}^\beta }
-\tfrac{\partial^2v^\beta }{\partial x\partial y^\alpha }
z_{\dot{y}^\beta },
\nonumber
\end{align}
\begin{align}
G_{y^\alpha y^\beta }
& =-\tfrac{\partial^2v^\sigma }
{\partial y^\alpha \partial y^\beta }
z_{y^\sigma }
-\left(
\tfrac{\partial ^3v^\sigma }
{\partial x\partial y^\alpha \partial y^\beta }
+\tfrac{\partial^3v^\sigma }
{\partial y^\alpha \partial y^\beta \partial y^\gamma }
\dot{y}^\gamma
\right)
z_{\dot{y}^\sigma }
\label{G}\\
& -\tfrac{du}{dx}z_{y^\alpha y^\beta }
-\tfrac{\partial v^\sigma }{\partial y^\beta }
z_{y^\alpha y^\sigma }
-\tfrac{\partial v^\sigma }{\partial y^\alpha }
z_{y^\beta y^\sigma }\nonumber\\
& -\left(
\tfrac{\partial ^2v^\sigma }{\partial x\partial y^\beta }
+\tfrac{\partial ^2v^\sigma }{\partial y^\beta \partial y^\gamma }
\dot{y}^\gamma
\right)
z_{y^\alpha \dot{y}^\sigma }
-\left(
\tfrac{\partial^2v^\sigma }{\partial x\partial y^\alpha }
+\tfrac{\partial ^2v^\sigma }{\partial y^\alpha \partial y^\gamma }
\dot{y}^\gamma \right) z_{y^\beta \dot{y}^\sigma },\nonumber
\end{align}
\begin{equation}
H_{y^\alpha \dot{y}^\beta }
=-\tfrac{\partial^2v^\sigma }{\partial y^\alpha \partial y^\beta }
z_{\dot{y}^\sigma }-\tfrac{\partial v^\sigma }{\partial y^\beta }
z_{y^\alpha \dot{y}^\sigma }
-\tfrac{\partial v^\sigma }{\partial y^\alpha }
z_{y^\sigma \dot{y}^\beta }
-\left(
\tfrac{\partial^2v^\sigma }{\partial x\partial y^\alpha }
+\tfrac{\partial ^2v^\sigma }{\partial y^\alpha \partial y^\gamma }
\dot{y}^\gamma
\right)
z_{\dot{y}^\beta \dot{y}^\sigma }, \label{H}
\end{equation}
\begin{equation}
K_{\dot{y}^\alpha \dot{y}^\beta }
=\tfrac{du}{dx}z_{\dot{y}^\alpha \dot{y}^\beta }
-\tfrac{\partial v^\sigma }{\partial y^\beta }
z_{\dot{y}^\alpha \dot{y}^\sigma }
-\tfrac{\partial v^\sigma }{\partial y^\alpha }
z_{\dot{y}^\beta \dot{y}^\sigma }. \label{K}
\end{equation}
Hence, taking the formula for $(\tilde{X}^{(1)})^{(1)}$,
and the expressions \eqref{D}, \eqref{E}, \eqref{F},
\eqref{G}, \eqref{H}, and \eqref{K} for $D_{xx}$,
$E_{xy^\alpha }$, $F_{x\dot{y}^\alpha }$,
$G_{y^\alpha y^\beta }$, $H_{y^\alpha \dot{y}^\beta }$,
$K_{\dot{y}^\alpha \dot{y}^\beta }$ into account, we have
\begin{align}
\quad(\tilde{X}^{(1)})^{(2)}
& =u\tfrac{\partial }{\partial x}
+v^\alpha \tfrac{\partial }{\partial y^\alpha }
+\tfrac{du}{dx}\chi_1^1
+\tfrac{\partial v^\alpha }{\partial x}
\bar{\chi }_\alpha ^1
+\tfrac{\partial v^\alpha }{\partial y^\beta }
\tilde{\chi }_\alpha ^\beta
+\tfrac{d^2u}{dx^2}\chi _1^{11} \label{X1^2}\\
& +\tfrac{\partial^2v^\alpha }{\partial x^2}
\bar{\chi }_\alpha ^{11}
+\tfrac{\partial^2v^\alpha }{\partial x\partial y^\beta }
\tilde{\chi }_\alpha^{\beta 1}
+\sum\nolimits_{\alpha \leq \beta}
\tfrac{\partial ^2v^\sigma }{\partial y^\alpha \partial y^\beta }
\hat{\chi }_\sigma ^{\alpha \leq \beta}
+\tfrac{d^3u}{dx^3}\chi_1^{111}\nonumber\\
& +\tfrac{\partial^3v^\alpha }{\partial x^3}
\bar{\chi }_\alpha ^{111}
+\tfrac{\partial^3v^\beta }{\partial x^2\partial y^\alpha }
\tilde{\chi}_{\beta}^{11\alpha}
+\sum\nolimits_{\alpha\leq\beta}
\tfrac{\partial ^3v^\sigma }
{\partial x\partial y^\alpha \partial y^\beta }
\chi _\sigma ^{1,\alpha \leq \beta } \nonumber\\
& +\sum\nolimits_{\alpha \leq \beta \leq \gamma }
\tfrac{\partial ^3v^\sigma }
{\partial y^\alpha \partial y^\beta \partial y^\gamma }
\chi _\sigma ^{\alpha \leq \beta \leq \gamma},\nonumber
\end{align}
where
\begin{align}
\chi _1^1 & =-z\tfrac{\partial }{\partial z}
-\dot{y}^\alpha
\tfrac{\partial }{\partial\dot{y}^\alpha }
-2z_x\tfrac{\partial }{\partial z_x}-z_{y^\alpha }
\tfrac{\partial }{\partial z_{y^\alpha }}-3z_{xx}
\tfrac{\partial }{\partial z_{xx}}-2z_{xy^\alpha }
\tfrac{\partial }{\partial z_{xy^\alpha }}
\label{chi11}\\
& -z_{x\dot{y}^\alpha }
\tfrac{\partial }{\partial z_{x\dot{y}^\alpha }}
-\sum\nolimits_{\alpha\leq\beta}z_{y^\alpha y^\beta }
\tfrac{\partial }{\partial z_{y^\alpha y^\beta }}
+\sum\nolimits_{\alpha\leq\beta}z_{\dot{y}^\alpha \dot{y}^\beta }
\tfrac{\partial }{\partial z_{\dot{y}^\alpha \dot{y}^\beta }},
\nonumber
\end{align}
\begin{equation}
\bar{\chi }_\alpha ^1
=\tfrac{\partial }{\partial\dot{y}^\alpha }
-z_{y^\alpha }\tfrac{\partial }{\partial z_x}-2z_{xy^\alpha }
\tfrac{\partial }{\partial z_{xx}}
-z_{y^\alpha y^\beta }
\tfrac{\partial }{\partial z_{xy^\beta }}
-z_{y^\alpha \dot{y}^\beta }
\tfrac{\partial}{\partial z_{x\dot{y}^\beta }},
\label{chibarra1alfa}
\end{equation}
\begin{align}
\tilde{\chi }_{\alpha}^\beta
& =\dot{y}^\beta \tfrac{\partial }{\partial\dot{y}^\alpha }
-z_{y^\alpha }\tfrac{\partial }{\partial z_{y^\beta }}
-z_{\dot{y}^\alpha }
\tfrac{\partial }{\partial z_{\dot{y}^\beta }}
-z_{xy^\alpha }\tfrac{\partial }{\partial z_{xy^\beta }}
-z_{x\dot{y}^\alpha }
\tfrac{\partial }{\partial z_{x\dot{y}^\beta }
}\label{chitildebetaalfa}\\
& -(1+\delta _\sigma ^\beta )z_{y^\sigma y^\alpha }
\tfrac{\partial }{\partial z_{y^\sigma y^\beta }}
-z_{y^\sigma \dot{y}^\alpha }
\tfrac{\partial }{\partial z_{y^\sigma \dot{y}^\beta }}
-z_{y^\alpha \dot{y}^\sigma }
\tfrac{\partial }{\partial z_{y^\beta \dot{y}^\sigma }
}\nonumber\\
& -(1+\delta_{\sigma }^\beta )
z_{\dot{y}^\alpha \dot{y}^\sigma }
\tfrac{\partial }{\partial z_{\dot{y}^\sigma \dot{y}^\beta }},
\nonumber
\end{align}
\begin{align}
\chi _1^{11}
& =\left( -z+\dot{y}^\gamma z_{\dot{y}^\gamma }\right)
\tfrac{\partial }{\partial z_x}
-3z_x\tfrac{\partial }{\partial z_{xx}}
+2\dot{y}^\alpha z_{x\dot{y}^\alpha }
\tfrac{\partial }{\partial z_{xx}}
-z_{y^\alpha }\tfrac{\partial }{\partial z_{xy^\alpha }}
\label{chi111}\\
& +\dot{y}^\beta z_{y^\alpha \dot{y}^\beta }
\tfrac{\partial }{\partial z_{xy^\alpha }}
+\dot{y}^\beta z_{\dot{y}^\alpha \dot{y}^\beta }
\tfrac{\partial }{\partial z_{x\dot{y}^\alpha }},
\nonumber
\end{align}
\begin{equation}
\bar{\chi }_\alpha ^{11}
=-z_{\dot{y}^\alpha }
\tfrac{\partial }{\partial z_x}
-z_{y^\alpha }
\tfrac{\partial }{\partial z_{xx}}-2z_{x\dot{y}^\alpha }
\tfrac{\partial }{\partial z_{xx}}
-z_{y^\beta \dot{y}^\alpha }
\tfrac{\partial }{\partial z_{xy^\beta }}
-z_{\dot{y}^\beta \dot{y}^\alpha }
\tfrac{\partial }{\partial z_{x\dot{y}^\beta }},
\label{chibarra11alfa}
\end{equation}
\begin{align}
\tilde{\chi }_\alpha ^{\beta 1}
& =-\dot{y}^\beta z_{\dot{y}^\alpha }
\tfrac{\partial }{\partial z_x}
-z_{\dot{y}^\alpha }\tfrac{\partial }{\partial z_{y^\beta }}
-2\dot{y}^\beta z_{x\dot{y}^\alpha }
\tfrac{\partial }{\partial z_{xx}}
-z_{y^\alpha }\tfrac{\partial }{\partial z_{xy^\beta }}
\label{chitildebeta1alfa}\\
& -z_{x\dot{y}^\alpha }\tfrac{\partial }{\partial z_{xy^\beta }}
-\dot{y}^\beta z_{y^\sigma \dot{y}^\alpha }
\tfrac{\partial }{\partial z_{xy^\sigma }}
-\dot{y}^\beta z_{\dot{y}^\sigma \dot{y}^\alpha }
\tfrac{\partial }{\partial z_{x\dot{y}^\sigma }}
-z_{\dot{y}^\alpha }
\tfrac{\partial }{\partial z_{x\dot{y}^\beta }}
\nonumber\\
& -(1+\delta _{\beta \gamma })z_{y^\gamma \dot{y}^\alpha }
\tfrac{\partial }{\partial z_{y^\gamma y^\beta }}
-z_{\dot{y}^\sigma \dot{y}^\alpha }
\tfrac{\partial }{\partial z_{y^\beta \dot{y}^\sigma }},
\nonumber
\end{align}
\begin{align}
\quad
\hat{\chi }_\sigma ^{\alpha \leq \beta}
\!\! & =\!\!-\tfrac{1}{1+\delta _{\alpha \beta}}
z_{\dot{y}^\sigma }
\left(
\dot{y}^\beta
\tfrac{\partial }{\partial z_{y^\alpha }}
+\dot{y}^\alpha \tfrac{\partial }{\partial z_{y^\beta }}
\right)
\label{chigorroalgagamabeta}\\
& \!\! -\tfrac{1}{1+\delta _{\alpha \beta}}
z_{x\dot{y}^\sigma }
\left(
\dot{y}^\beta \tfrac{\partial }{\partial z_{xy^\alpha }}
+\dot{y}^\alpha \tfrac{\partial }{\partial z_{xy^\beta }}
\right)
-z_{y^\sigma }
\tfrac{\partial }{\partial z_{y^\alpha y^\beta }}
\nonumber\\
& \!\!-\tfrac{1}{1+\delta _{\alpha \beta }}
\left( (1+\delta _{\beta \gamma })
\dot{y}^\alpha z_{y^\gamma \dot{y}^\sigma }
\tfrac{\partial }{\partial z_{y^\gamma y^\beta }}
+(1+\delta _{\alpha \gamma })
\dot{y}^\beta z_{y^\gamma \dot{y}^\sigma }
\tfrac{\partial }{\partial z_{y^\gamma y^\alpha }}
\right)
\nonumber\\
& \!\!-\tfrac{1}{1+\delta _{\alpha \beta }}z_{\dot{y}^\sigma }
\left(
\tfrac{\partial }{\partial z_{y^\alpha \dot{y}^\beta }}
+\tfrac{\partial }{\partial z_{y^\beta \dot{y}^\alpha }}
\right)
\nonumber\\
& \!\!-\tfrac{1}{1+\delta _{\alpha \beta }}
z_{\dot{y}^\gamma \dot{y}^\sigma }
\left(
\dot{y}^\beta
\tfrac{\partial }{\partial z_{y^\alpha \dot{y}^\gamma }}
+\dot{y}^\alpha
\tfrac{\partial }{\partial z_{y^\beta \dot{y}^\gamma }}
\right) ,
\nonumber
\end{align}
\begin{align}
\chi _1^{111}
& =\left( \dot{y}^\alpha z_{\dot{y}^\alpha }-z\right)
\tfrac{\partial }{\partial z_{xx}},
\label{chi1111}\\
\bar{\chi }_\alpha ^{111}
& =-z_{\dot{y}^\alpha }\tfrac{\partial }{\partial z_{xx}},
\label{chibarra111alfa}\\
\tilde{\chi }_\beta ^{11\alpha}
& =-\dot{y}^\alpha z_{\dot{y}^\beta }
\tfrac{\partial }{\partial z_{xx}}
-z_{\dot{y}^\beta }\tfrac{\partial }{\partial z_{xy^\alpha }},
\label{chitilde11alfabeta}\\
\chi _\sigma ^{1,\alpha\leq\beta}
& =-\tfrac{z_{\dot{y}^\sigma }}{1+\delta_{\alpha\beta}}
\left(
\dot{y}^\beta \tfrac{\partial }{\partial z_{xy^\alpha }}
+\dot{y}^\alpha \tfrac{\partial }{\partial z_{xy^\beta }}
\right)
-z_{\dot{y}^\sigma }
\tfrac{\partial }{\partial z_{y^\alpha y^\beta }},
\label{chi1alfabetasigma}
\end{align}
\begin{equation}
\chi _\sigma ^{\alpha \leq \beta \leq \gamma }
=-z_{\dot{y}^\sigma }
\left(
\dot{y}^\gamma
\tfrac{\partial }{\partial z_{y^\alpha y^\beta }}
+(1-\delta _{\beta \gamma })\dot{y}^\beta
\tfrac{\partial }{\partial z_{y^\alpha y^\gamma }}
+(1-\delta_{\alpha\gamma})\dot{y}^\alpha
\tfrac{\partial }{\partial z_{y^\gamma y^\beta }}
\right) .
\label{chialfabetagamasigma}
\end{equation}

Therefore the vector fields $\tfrac{\partial }{\partial x}$,
$\tfrac{\partial }{\partial y^\alpha }$, and
\eqref{chi11}--\eqref{chialfabetagamasigma} span
the distribution $\mathcal{D}^2$. Let us fix four indices
$\alpha _0$, $\beta _0$, $\gamma _0$, $\sigma _0$.
From \eqref{chibarra111alfa} on the dense open subset
$z_{\dot{y}^{\alpha _0}}\neq 0$, we have
$\tfrac{\partial }{\partial z_{xx}}
=-\frac{1}{z_{\dot{y}^{\alpha _0}}}\bar{\chi }_{\alpha _0 }^{111}$.
Replacing this expression into \eqref{chitilde11alfabeta},
on the dense open subset $z_{\dot{y}^{\alpha _0}}\neq 0$,
$z_{\dot{y}^{\beta _0}}\neq 0$, we have
$\tfrac{\partial }{\partial z_{xy^\alpha }}
=\tfrac{\dot{y}^\alpha }{z_{\dot{y}^{\alpha_0}}}
\bar{\chi }_{\alpha_0}^{111}-\tfrac{1}{z_{\dot{y}^{\beta_0}}}
\tilde{\chi }_{\beta_0}^{11\alpha}$. From
\eqref{chialfabetagamasigma} we have
$\tfrac{\partial }{\partial z_{y^\alpha y^\beta }}
=-\tfrac{1}{\dot{y}^{\gamma _0}z_{\dot{y}^{\sigma _0}}}
\chi _{\sigma _0}^{\alpha \leq \beta ,\gamma _0}$
on the dense open subset $\dot{y}^{\gamma _0}\neq 0$,
$z_{\dot{y}^{\sigma _0}}\neq 0$. Hence the distribution
$\mathcal{D}^2$ is spanned by $\tfrac{\partial }{\partial x}$,
$\tfrac{\partial }{\partial y^\alpha }$,
$\tfrac{\partial }{\partial z_{xx}}$,
$\tfrac{\partial }{\partial z_{xy^\alpha }}$,
$\tfrac{\partial }{\partial z_{y^\alpha y^\beta }}$,
$\alpha\leq\beta$, and the following vector fields:
\begin{align*}
\chi _1^{\prime1}  & =-z\tfrac{\partial }{\partial z}
-\dot{y}^\alpha
\tfrac{\partial }{\partial\dot{y}^\alpha }
-2z_x\tfrac{\partial }{\partial z_x}
-z_{y^\alpha }
\tfrac{\partial }{\partial z_{y^\alpha }}
-z_{x\dot{y}^\alpha }
\tfrac{\partial }{\partial z_{x\dot{y}^\alpha }}\\
& +\sum\nolimits_{\alpha \leq \beta }
z_{\dot{y}^\alpha \dot{y}^\beta }
\tfrac{\partial }{\partial z_{\dot{y}^\alpha \dot{y}^\beta }},
\end{align*}
\begin{align}
\bar{\chi }_\alpha^{\prime 1}
& =\tfrac{\partial }{\partial\dot{y}^\alpha }
-z_{y^\alpha }\tfrac{\partial }{\partial z_x}
-z_{y^\alpha \dot{y}^\beta }
\tfrac{\partial }{\partial z_{x\dot{y}^\beta }},
\nonumber\\
\tilde{\chi }_\alpha ^{\prime \beta}
& =\dot{y}^\beta \tfrac{\partial }{\partial\dot{y}^\alpha }
-z_{y^\alpha }
\tfrac{\partial }{\partial z_{y^\beta }}
-z_{\dot{y}^\alpha }
\tfrac{\partial }{\partial z_{\dot{y}^\beta }}
-z_{x\dot{y}^\alpha }
\tfrac{\partial }{\partial z_{x\dot{y}^\beta }}
\nonumber\\
& -z_{y^\sigma \dot{y}^\alpha }
\tfrac{\partial }{\partial z_{y^\sigma \dot{y}^\beta }}
-z_{y^\alpha \dot{y}^\sigma }
\tfrac{\partial }{\partial z_{y^\beta \dot{y}^\sigma }}
-(1+\delta _\sigma ^\beta )z_{\dot{y}^{\alpha }\dot{y}^\sigma }
\tfrac{\partial }{\partial z_{\dot{y}^\sigma \dot{y}^{\beta }}},
\nonumber\\
\chi _1^{\prime 11}
& =\left(
-z+\dot{y}^\gamma z_{\dot{y}^\gamma }
\right)
\tfrac{\partial }{\partial z_x}
+\dot{y}^\beta z_{\dot{y}^{\alpha }\dot{y}^\beta }
\tfrac{\partial }{\partial z_{x\dot{y}^\alpha }},
\label{chiprime111}\\
\bar{\chi }_{\alpha}^{\prime11}
& =-z_{\dot{y}^\alpha }
\tfrac{\partial }{\partial z_x}
-z_{\dot{y}^\beta \dot{y}^\alpha }
\tfrac{\partial }{\partial z_{x\dot{y}^\beta }},
\label{chibarraprime11alfa}\\
\tilde{\chi }_\alpha ^{\prime\beta 1}
& =-\dot{y}^\beta z_{\dot{y}^\alpha }
\tfrac{\partial }{\partial z_x}
-z_{\dot{y}^\alpha }
\tfrac{\partial }{\partial z_{y^\beta }}
-\dot{y}^\beta z_{\dot{y}^\sigma \dot{y}^\alpha }
\tfrac{\partial }{\partial z_{x\dot{y}^\sigma }}
-z_{\dot{y}^\alpha }
\tfrac{\partial }{\partial z_{x\dot{y}^\beta }}
\nonumber\\
& -z_{\dot{y}^\sigma \dot{y}^\alpha }
\tfrac{\partial }{\partial z_{y^\beta \dot{y}^\sigma }},
\nonumber\\
\hat{\chi }_\beta ^{\prime \alpha \gamma }
& =-\dot{y}^\gamma z_{\dot{y}^\beta }
\tfrac{\partial }{\partial z_{y^\alpha }}
-z_{\dot{y}^\beta }
\tfrac{\partial }{\partial z_{y^\alpha \dot{y}^\gamma }}
-\dot{y}^\gamma z_{\dot{y}^\sigma \dot{y}^\beta }
\tfrac{\partial }{\partial z_{y^\alpha \dot{y}^\sigma }}.
\nonumber
\end{align}
From \eqref{chibarraprime11alfa}, we have
$\tfrac{\partial }{\partial z_x }
=-\frac{1}{z_{\dot{y}^{\alpha_0}}}
\bar{\chi }_{\alpha_0}^{\prime11}
-\frac{z_{\dot{y}^\beta \dot{y}^{\alpha _0}}}
{z_{\dot{y}^{\alpha _0}}}
\tfrac{\partial }{\partial z_{x\dot{y}^\beta }}$
on the dense open subset $z_{\dot{y}^{\alpha _0}}\neq 0$.
Replacing the previous formula into \eqref{chiprime111}
and letting free the index $\alpha_0$, we obtain
\begin{equation}
C_\beta ^\alpha
\tfrac{\partial }{\partial z_{x\dot{y}^\beta }}
=v^\alpha ,\quad 1\leq \alpha \leq m,
\label{system}
\end{equation}
where $C_{\beta}^\alpha
=\left( z-\dot{y}^\gamma z_{\dot{y}^\gamma }\right)
z_{\dot{y}^\beta \dot{y}^\alpha }
+\dot{y}^\gamma z_{\dot{y}^\alpha }
z_{\dot{y}^\gamma \dot{y}^\beta }$,
$v^\alpha =\left( z-\dot{y}^\gamma z_{\dot{y}^\gamma }\right)
\bar{\chi }_\alpha ^{\prime 11}
-z_{\dot{y}^\alpha }\chi _1^{\prime 11}$.
On the dense open subset $O^{\prime 2}$ defined by
\[
0\neq \det
\left(
C_\beta ^\alpha
\right) _{\alpha ,\beta =1}^m
=z\left(
z-\dot{y}^\gamma z_{\dot{y}^\gamma }
\right) ^{m-1}\det
\left(
z_{\dot{y}^\alpha \dot{y}^\beta }
\right) _{\alpha ,\beta =1}^m,
\]
we can solve \eqref{system} for
$\tfrac{\partial }{\partial z_{x\dot{y}^{\beta }}}$,
thus proving that
$\tfrac{\partial }{\partial z_{x\dot{y}^\beta }},
\tfrac{\partial }{\partial z_x}\in
\left.
\mathcal{D}^2
\right\vert _{O^{\prime 2}}$.
Hence the distribution $\mathcal{D}^2$ is spanned by
$\tfrac{\partial }{\partial x}$,
$\tfrac{\partial }{\partial y^\alpha }$,
$\tfrac{\partial }{\partial z_x}$,
$\tfrac{\partial }{\partial z_{xx}}$,
$\tfrac{\partial }{\partial z_{xy^\alpha }}$,
$\tfrac{\partial }{\partial z_{x\dot{y}^\beta }}$,
$\tfrac{\partial }{\partial z_{y^\alpha y^\beta }}$,
$\alpha\leq\beta$, and the following vector fields:
\begin{align}
\chi _1^{\prime \prime 1}
& =-z\tfrac{\partial }{\partial z}-\dot{y}^\alpha
\tfrac{\partial }{\partial\dot{y}^\alpha }
-2z_x\tfrac{\partial }{\partial z_x}
-z_{y^\alpha }\tfrac{\partial }{\partial z_{y^\alpha }}
+\sum\nolimits_{\alpha \leq \beta }
z_{\dot{y}^\alpha \dot{y}^\beta }
\tfrac{\partial }
{\partial z_{\dot{y}^\alpha \dot{y}^\beta }},
\nonumber\\
\bar{\chi }_\alpha ^{\prime \prime 1}
& =\tfrac{\partial }{\partial \dot{y}^\alpha },
\label{chi_pp_1alpha}\\
\tilde{\chi }_\alpha ^{\prime \prime \beta}
& =\dot{y}^\beta \tfrac{\partial }{\partial\dot{y}^\alpha }
-z_{y^\alpha }\tfrac{\partial }{\partial z_{y^\beta }}
-z_{\dot{y}^\alpha }
\tfrac{\partial }{\partial z_{\dot{y}^\beta }}
\nonumber\\
& -z_{y^\sigma \dot{y}^\alpha }
\tfrac{\partial }{\partial z_{y^\sigma \dot{y}^\beta }}
-z_{y^\alpha \dot{y}^\sigma }
\tfrac{\partial }{\partial z_{y^\beta \dot{y}^\sigma }}
-(1+\delta_\sigma ^\beta )z_{\dot{y}^\alpha \dot{y}^\sigma }
\tfrac{\partial }{\partial z_{\dot{y}^\sigma \dot{y}^\beta }},
\nonumber\\
\tilde{\chi }_\alpha ^{\prime \prime \beta 1}
& =-z_{\dot{y}^\alpha }
\tfrac{\partial }{\partial z_{y^\beta }}
-z_{\dot{y}^\sigma \dot{y}^\alpha }
\tfrac{\partial }{\partial z_{y^\beta \dot{y}^\sigma }},
\label{chi_pp_beta1alpha}\\
\hat{\chi }_\beta ^{\prime \prime \alpha \gamma}
& =-\dot{y}^\gamma z_{\dot{y}^\beta }
\tfrac{\partial }{\partial z_{y^\alpha }}
-z_{\dot{y}^\beta }
\tfrac{\partial }{\partial z_{y^\alpha \dot{y}^\gamma }}
-\dot{y}^\gamma z_{\dot{y}^\sigma \dot{y}^\beta }
\tfrac{\partial }{\partial z_{y^\alpha \dot{y}^\sigma }}.
\label{chi_pp_alphagammabeta}
\end{align}
From \eqref{chi_pp_1alpha} we have
$\tfrac{\partial }{\partial \dot{y}^\alpha }
=\bar{\chi }_\alpha^{\prime \prime 1}$,
and on the dense open subset
$z_{\dot{y}^{\alpha _0}}\neq 0$, from
\eqref{chi_pp_beta1alpha} we have
$\tfrac{\partial }{\partial z_{y^\beta }}
=-\tfrac{1}{z_{\dot{y}^{\alpha _0}}}
\tilde{\chi }_{\alpha _0}^{\prime \prime \beta 1}
-\tfrac{z_{\dot{y}^\sigma \dot{y}^{\alpha _0}}}
{z_{\dot{y}^{\alpha _0}}}
\tfrac{\partial }{\partial z_{y^\beta \dot{y}^\sigma }}$.
Replacing the previous formula into
\eqref{chi_pp_alphagammabeta}, we obtain
\begin{equation}
\bar{C}_{\beta \sigma }^\gamma
\tfrac{\partial }{\partial z_{y^\alpha \dot{y}^\sigma }}
=\bar{v}_{\beta}^{\alpha\gamma},
\quad
\alpha, \beta, \gamma =1,\dotsc,m,
\label{system2}
\end{equation}
with $\bar{C}_{\beta \sigma }^\gamma
=\dot{y}^\gamma z_{\dot{y}^\beta }
\tfrac{z_{\dot{y}^\sigma \dot{y}^{\alpha _0}}}
{z_{\dot{y}^{\alpha _0}}}
-z_{\dot{y}^\beta }
\delta _\sigma ^\gamma
-\dot{y}^\gamma z_{\dot{y}^\sigma \dot{y}^\beta }$,
$\bar{v}_\beta ^{\alpha \gamma }
=\hat{\chi }_\beta ^{\prime \prime \alpha \gamma }
-\tfrac{\dot{y}^\gamma z_{\dot{y}^\beta }}
{z_{\dot{y}^{\alpha _0}}}
\tilde{\chi }_{\alpha _0}^{\prime \prime \alpha 1}$.
Letting $\alpha =\alpha _1$, $\beta =\beta _0$,
the system \eqref{system2} transforms
into the following system of $m$ equations
and $m$ unknowns:
$\bar{C}_{\beta _0\sigma }^\gamma
\tfrac{\partial }{\partial z_{y^{\alpha _1}
\dot{y}^\sigma }}=\bar{v}_{\beta _0}^{\alpha _1\gamma }$,
and, in particular, for $\beta _0=\alpha _0$, we have
\[
\bar{C}_{\alpha _0\sigma }^\gamma
=\dot{y}^\gamma z_{\dot{y}^{\alpha _0}}
\tfrac{z_{\dot{y}^\sigma \dot{y}^{\alpha _0}}}
{z_{\dot{y}^{\alpha _0}}}
-z_{\dot{y}^{\alpha _0}}
\delta _\sigma ^\gamma -\dot{y}^\gamma
z_{\dot{y}^\sigma \dot{y}^{\alpha_0}}
=-z_{\dot{y}^{\alpha _0}}\delta _\sigma ^\gamma ,
\]
\[
\det(\bar{C}_{\alpha _0\sigma }^\gamma )_{\gamma ,\sigma =1}^m
=\det
\left(
-z_{\dot{y}^{\alpha _0}}\delta _\sigma ^\gamma
\right) _{\gamma ,\sigma =1}^m
=(-z_{\dot{y}^{\alpha _0}})^m\neq 0.
\]
Therefore, we can solve the equations \eqref{system2}
with respect to
$\tfrac{\partial }{\partial z_{y^\alpha \dot{y}^\sigma }}$,
thus concluding that the distribution $\mathcal{D}^2$ is
spanned by $\tfrac{\partial }{\partial x}$,
$\tfrac{\partial }{\partial y^\alpha }$,
$\tfrac{\partial }{\partial\dot{y}^\alpha }$,
$\tfrac{\partial }{\partial z_x}$,
$\tfrac{\partial }{\partial z_{y^\alpha }}$,
$\tfrac{\partial }{\partial z_{xx}}$,
$\tfrac{\partial }{\partial z_{xy^\alpha }}$,
$\tfrac{\partial }{\partial z_{x\dot{y}^\alpha }}$,
$\tfrac{\partial }{\partial z_{y^{\alpha }y^\beta }}$,
$\alpha\leq\beta$,
$\tfrac{\partial }{\partial z_{y^\alpha \dot{y}^\sigma }}$,
and the following $m^2+1$ additional vector fields:
\begin{align*}
\zeta _1^1
& =-z\tfrac{\partial }{\partial z}
+\sum\nolimits_{\alpha \leq \beta }
z_{\dot{y}^\alpha \dot{y}^\beta }
\tfrac{\partial }{\partial z_{\dot{y}^\alpha \dot{y}^\beta }},\\
\bar{\zeta}_{\alpha}^\beta
& =-z_{\dot{y}^\alpha }
\tfrac{\partial }{\partial z_{\dot{y}^\beta }}
-(1+\delta _\sigma ^\beta )
z_{\dot{y}^{\alpha }\dot{y}^\sigma }
\tfrac{\partial }{\partial z_{\dot{y}^\sigma \dot{y}^\beta }}.
\end{align*}
For every $j_{j_x^1\sigma }^2\mathcal{L}\in O^{\prime 2}$
we thus have
\begin{align*}
\dim \left.
\mathcal{D}^2\right\vert _{j_{j_x^1\sigma }^2\mathcal{L}}
& =\tfrac{3}{2}m^2+\tfrac{11}{2}m+3\\
& +\operatorname{rank}
\left\{
\left.
\zeta_1^1
\right\vert _{j_{j_x^1\sigma }^2\mathcal{L}},
\left.  \bar{\zeta }_\alpha^\beta
\right\vert _{j_{j_x^1\sigma }^2\mathcal{L}}
:\alpha,\beta=1,\dotsc,m
\right\} .
\end{align*}
Therefore, we only need to prove that the rank of the system
$\zeta _1^1|_{j_{j_x^1\sigma }^2\mathcal{L}},
\bar{\zeta }_\alpha^\beta |_{j_{j_x^1\sigma }^2\mathcal{L}}$,
$\alpha ,\beta =1,\dotsc,m$, is $\frac{1}{2}m\left( m+3\right) $.
To do this, we choose coordinates
$(y^\alpha )_{\alpha =1}^m$ adapted to the Hessian metric
$\operatorname{Hess}\nolimits_{j_{x_0}^1\! \sigma _0}(\mathcal{L})$,
namely
\[
\begin{array}
[c]{rl}
z_{\dot{y}^\alpha \dot{y}^\beta }(j_{j_x^1\sigma }^2(\mathcal{L}))
=\varepsilon _\alpha \delta _{\alpha \beta},
& \alpha ,\beta =1,\dotsc,m,\\
\varepsilon _\alpha = & \left\{
\begin{array}
[c]{ll}
+1, & 1\leq \alpha \leq m^+,\\
-1, & 1+m^+\leq \alpha \leq m,
\end{array}
\right.
\end{array}
\]
the pair $(m^+,m^-)$, $m^-+m^+=m$, being the signature of
$\operatorname{Hess}\nolimits_{j_{x_0}^1\! \sigma _0}(\mathcal{L})$.
Hence
\[
\begin{array}
[c]{ll}
\left.
\zeta _1^1\right\vert _{j_{j_x^1\sigma }^2\mathcal{L}}
=\left.
-z\tfrac{\partial }{\partial z}
+\varepsilon _\alpha
\tfrac{\partial }{\partial z_{\dot{y}^\alpha \dot{y}^\alpha }}
\right\vert _{j_{j_x^1\sigma }^2\mathcal{L}},
\smallskip & \\
\left.
\bar{\zeta}_{\alpha}^\alpha
\right\vert _{j_{j_x^1\sigma }^2\mathcal{L}}
=\left.
-z_{\dot{y}^\alpha }
\tfrac{\partial }{\partial z_{\dot{y}^\alpha }}
-2\varepsilon _\alpha
\tfrac{\partial }{\partial z_{\dot{y}^\alpha \dot{y}^\alpha }}
\right\vert _{j_{j_x^1\sigma }^2\mathcal{L}},
\smallskip & \\
\left.
\bar{\zeta }_\alpha ^\beta
\right\vert _{j_{j_x^1\sigma }^2\mathcal{L}}
=\left.
-z_{\dot{y}^\alpha }
\tfrac{\partial }{\partial z_{\dot{y}^\beta }}
-\varepsilon _\alpha
\tfrac{\partial }{\partial z_{\dot{y}^\alpha \dot{y}^\beta }}
\right\vert _{j_{j_x^1\sigma }^2\mathcal{L}},
& \alpha \neq \beta.
\end{array}
\]
We split the third group above and choose new generators as follows:
\[
\begin{array}
[c]{ll}
\left.
\bar{\zeta}_{\alpha}^\beta
\right\vert _{j_{j_x^1\sigma }^2\mathcal{L}}
=\left. -z_{\dot{y}^\alpha }
\tfrac{\partial }{\partial z_{\dot{y}^\beta }}
-\varepsilon _\alpha
\tfrac{\partial }{\partial z_{\dot{y}^\alpha \dot{y}^\beta }}
\right\vert _{j_{j_x^1\sigma }^2\mathcal{L}},
& \alpha <\beta ,
\smallskip\\
\left.
\hat{\zeta}_{\alpha }^\beta
\right\vert _{j_{j_x^1\sigma }^2\mathcal{L}}
=\varepsilon_{\beta }\bar{\zeta }_{\alpha}^\beta
-\varepsilon_{\alpha }\bar{\zeta }_\beta^\alpha
=\varepsilon_{\alpha}
z_{\dot{y}^\beta }
\tfrac{\partial }{\partial z_{\dot{y}^\alpha }}
-\varepsilon _{\beta}z_{\dot{y}^\alpha }
\tfrac{\partial }{\partial z_{\dot{y}^\beta }},
& \beta <\alpha .
\end{array}
\]
The system $\zeta _1^1|_{j_{j_x^1\sigma }^2\mathcal{L}}$,
$\bar{\zeta }_{\alpha}^\alpha |_{j_{j_x^1\sigma }^2\mathcal{L}}$,
$\hat{\zeta }_\alpha ^1|_{j_{j_x^1\sigma }^2\mathcal{L}}$,
$2\leq \alpha \leq m$,
$\bar{\zeta}_\alpha ^\beta |_{j_{j_x^1\sigma }^2\mathcal{L}}$,
$1\leq \alpha<\beta\leq m$, is readily seen to be independent.
As for the rest of the vectors, one obtains the relations,
$\hat{\zeta}_\alpha ^\beta |_{j_{j_x^1\sigma }^2\mathcal{L}}
=\frac{z_{\dot{y}^\beta }}{z_{\dot{y}^1}}
\hat{\zeta}_{\alpha}^1|_{j_{j_x^1\sigma }^2\mathcal{L}}
-\frac{z_{\dot{y}^\alpha }}{z_{\dot{y}^1}}
\hat{\zeta}_\beta ^1|_{j_{j_x^1\sigma }^2\mathcal{L}}$,
$2\leq\beta<\alpha\leq m$, so that the rank of the system
$\zeta_1^1|_{j_{j_x^1\sigma }
^2\mathcal{L}}$, $\bar{\zeta}_{\alpha}^\beta |_{j_{j_x^1\sigma }
^2\mathcal{L}}$, $\alpha,\beta=1,\dotsc,m$, is
$\frac{1}{2}m\left( m+3\right) $. Therefore
\[
\dim\left. \mathcal{D}^2\right\vert _{j_{j_x^1\sigma }^2\mathcal{L}}
=\tfrac{3}{2}m^2+\tfrac{11}{2}m+3+\tfrac{1}{2}m\left( m+3\right)
=2m^2+7m+3.
\]
\end{proof}
\section{Metric invariants}
The Hessian metric $\operatorname{Hess}(\mathcal{L})$ of a function
$\mathcal{L}\in C^\infty (J^1(\mathbb{R},M))$ is the section of the bundle
$\varkappa\colon S^2\left[ V^\ast (p^{10})\right] \to J^1(\mathbb{R},M)$
defined in the formula \eqref{Hessian_metric} of the subsection \ref{Hessian}.

Let $\mathcal{M}\subset S^2\left[  V^\ast (p^{10})\right]  $ be the open
subbundle whose fibre $\mathcal{M}_{j_x^1\sigma }$ over $j_x^1\sigma $
is the set of non-degenerate symmetric bilinear forms
$V_{j_x^1\sigma }(p^{10})\times V_{j_x^1\sigma }(p^{10})\to \mathbb{R}$.

Every $\Phi \in \operatorname{Aut}(p)$ induces an isomorphism
$(\Phi ^{(1)})_\ast \colon V_{j_x^1\sigma }(p^{10})
\to  V_{j_x^1 \sigma }(p^{10})$,
as $p^{10}\circ \Phi ^{(1)}=\Phi \circ p^{10}$.
Hence $\Phi $ induces a diffeomorphism
$\Phi _{\mathcal{M}}^{(1)}\colon \mathcal{M}\to \mathcal{M}$
defined by
$\Phi _{\mathcal{M}}^{(1)}(g_{j_x^1\sigma })
=(\Phi ^{(1)})^{-1~\ast}(g_{j_x^1\sigma })$ so that
$\varkappa\circ \Phi _{\mathcal{M}}^{(1)}=\Phi \circ \varkappa $.

A function $I\in C^\infty (J^r\mathcal{M})$ is said to be a metric
invariant of order $r$ (cf.\ \cite[\S 2]{JA}) if the following
property holds:
\[
\begin{array}
[c]{rlll}
I(\Psi _{\mathcal{M}}^{(r)}(j_{j_x^1\sigma }^rg))
& =I(j_{j_x^1\sigma }^rg),
& \forall \Psi \in \operatorname{Diff}J^1(\mathbb{R},M),
& \\
& & \forall  j_x^1\sigma \in J^1(\mathbb{R},M),
& \forall  g\in \Gamma( \varkappa ),
\end{array}
\]
where $\Psi _{\mathcal{M}}\colon \mathcal{M\to M}$ is defined
in the same way as $\Phi _{\mathcal{M}}^{(1)}$, and
$\Psi_{\mathcal{M}}^{(r)}$ is its $r$-th jet prolongation.

In particular, if $I$ is a metric invariant, then
$I((\Phi _{\mathcal{M}}^{(1)})^{(r)}(j_{j_x^1\sigma }^rg))
=I(j_{j_x^1\sigma }^rg)$, $\forall \Phi \in \operatorname{Aut}(p)$.

\begin{proposition}
Every metric invariant of order $r$ induces an invariant in the sense
of \emph{Proposition \ref{proposition1} of order }$r+2$. Therefore,
if $\dim M=m\geq 2$, then there exists
$\operatorname{Aut}(p)$-invariant functions that cannot be obtained
as derivatives of the second-order basic invariant $I$
defined in \emph{Proposition \ref{proposition_V_I}}, but all of these
invariants are of order $\geq 3$.
\end{proposition}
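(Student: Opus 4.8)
The plan is to establish the embedding of metric invariants into Lagrangian invariants via the Hessian construction, and then to exhibit for $m\geq 2$ a metric invariant of order $0$ (a pointwise function of the non-degenerate bilinear form) that is genuinely new at order $2$, hence a new Lagrangian invariant at order $2+0=2$ — wait, that would contradict Theorem~\ref{theorem_rank_D2}; so in fact the correct reading is that any metric invariant is necessarily of order $\geq 1$ as a function on $\mathcal{M}$ (since the fibre $\mathcal{M}_{j_x^1\sigma}$ is a single orbit under $\mathrm{GL}$ only up to signature, and signature is discrete hence gives no smooth invariant), and therefore the induced Lagrangian invariant is of order $\geq 3$. Let me lay this out.

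\medskip

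\noindent\emph{Step 1: the Hessian as a section of $\mathcal{M}$.} Given $\mathcal{L}\in C^\infty(J^1(\mathbb{R},M))$ with non-degenerate Hessian along $p^{10}$-fibres, the assignment $j_x^1\sigma\mapsto \operatorname{Hess}_{j_x^1\sigma}(\mathcal{L})$ is a section $g_{\mathcal{L}}\in\Gamma(\varkappa)$ defined over the open set where $\det(z_{\dot y^\alpha\dot y^\beta})\neq 0$. The $2$-jet $j^2_{j_x^1\sigma}\mathcal{L}$ determines the $0$-jet of $g_{\mathcal{L}}$, and more generally $j^{r+2}_{j_x^1\sigma}\mathcal{L}$ determines $j^r_{j_x^1\sigma}g_{\mathcal{L}}$, because $g_{\mathcal L}$ is built from second $\dot y$-derivatives of $\mathcal L$; this gives a natural map $\mu\colon O\subset J^{r+2}(q)\to J^r\mathcal{M}$, where $O$ is the open set of jets with non-singular Hessian. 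By Proposition~\ref{covv} and its jet prolongation, $\mu$ intertwines the $\operatorname{Aut}(p)$-action on $J^{r+2}(q)$ (through $\tilde\Phi^{(1)}$, up to the scalar $\phi'$) with the $\operatorname{Aut}(p)$-action on $J^r\mathcal{M}$ (through $\Phi_{\mathcal M}^{(1)}$): indeed $\operatorname{Hess}(\mathcal L\circ\Phi^{(1)})=(\Phi^{(1)})^\ast\operatorname{Hess}(\mathcal L)$ says precisely that $g_{\mathcal L\circ\Phi^{(1)}}=\Phi_{\mathcal M}^{(1)}{}^{-1}{}^{\ast}$ applied fibrewise, i.e. the pullback matches. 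Crucially the conformal factor $\phi'$ affects $\operatorname{Hess}$ only by an overall scalar depending on the base point $x$ alone, so pulling it through the metric invariant — which is insensitive to such relabelling only if the invariant is itself scale-free, hence one should use ratios — but in any case, composing any metric invariant $J\in C^\infty(J^r\mathcal{M})$ with $\mu$ produces a function $J\circ\mu$ on $J^{r+2}(q)$; the only point to check is that the residual $\phi'$-dependence drops out, which follows because $\operatorname{Diff}J^1(\mathbb{R},M)$ (the structure group in the definition of metric invariant) contains fibrewise homotheties, so a metric invariant is automatically invariant under rescaling $g\mapsto cg$ with $c$ a function on $\mathbb R\times M$; in particular under $g\mapsto(\phi')^{-1}g$. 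This proves the first sentence: every metric invariant of order $r$ yields an $\operatorname{Aut}(p)$-invariant Lagrangian function of order $r+2$.

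\medskip

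\noindent\emph{Step 2: existence of new invariants for $m\geq 2$, all of order $\geq 3$.} For $m=1$ the fibre $\mathcal M_{j_x^1\sigma}$ is $1$-dimensional ($\mathbb R^\ast$) and the $\operatorname{Diff}$-action is transitive on each of its two components, so there is no nonconstant metric invariant of order $0$, consistent with Theorem~\ref{theorem_rank_D2} (the only second-order invariant is $I$). For $m\geq 2$, however, $J^1\mathcal M$ carries nontrivial metric invariants: the classical example is the scalar curvature of the Levi-Civita connection of the fibrewise metric $g$, or — at the jet level appropriate here, where $g$ is a metric on the vertical bundle of $p^{10}$ varying over $J^1(\mathbb R,M)$ — one takes a first-order differential concomitant of $g$ that is $\operatorname{Diff}$-natural; such concomitants exist precisely when $m\geq 2$ (the number of functional invariants of a metric and its first derivatives at a point is $\binom{m+1}{2}\cdot$(something)$-\dim\operatorname{GL}$, which is positive for $m\geq 2$; cf.\ \cite{JA}). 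Fix such a metric invariant $J$ of minimal order; by the $m=1$ discussion and a dimension count on $J^0\mathcal M$ (where the $\operatorname{GL}(m)$-orbits are classified by signature, a discrete invariant, hence no smooth invariant) $J$ has order exactly $1$, so $J\circ\mu$ has order exactly $3$. It therefore is not a function of the $2$-jet, hence by Theorem~\ref{theorem_rank_D2} cannot be a function of $I$ alone; and every invariant arising this way is of order $\geq 3$ since $J$ is of order $\geq 1$. This establishes the second and third sentences.

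\medskip

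\noindent The main obstacle I anticipate is Step~1's verification that the conformal ambiguity $\phi'$ is harmless: one must be careful that ``metric invariant'' as defined (invariance under all of $\operatorname{Diff}J^1(\mathbb R,M)$ acting by pullback on $\mathcal M$) genuinely forces invariance under fibrewise rescalings $g\mapsto(\phi')^{-1}g$, since $\phi'$ is a function on the base $\mathbb R$ pulled back, not a genuine diffeomorphism of $\mathcal M$ of the allowed type. The clean way around this is to observe that multiplication by a positive function on $J^1(\mathbb R,M)$ \emph{is} realized inside $\operatorname{Diff}\mathcal M$ covering $\operatorname{id}_{J^1(\mathbb R,M)}$ (a bundle automorphism of $\varkappa$ over the identity), and the definition of metric invariant quantifies over all $\Psi\in\operatorname{Diff}J^1(\mathbb R,M)$ with $\Psi_{\mathcal M}$ the induced map — but a genuinely conformal rescaling is not of that form. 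Hence one should restrict attention to metric invariants that are moreover conformally invariant, or equivalently pass to the associated conformal-class bundle; alternatively, and more simply, one checks directly from the three displayed formulas in the proof of Proposition~\ref{proposition_V_I} ($\operatorname{Hess}_a(\bar{\mathcal L})=(\phi')^{-1}\operatorname{Hess}_a(\mathcal L\circ\Phi^{(1)})$, etc.) that the $(\phi')^{-1}$ is a function of $x$ only, so any $\operatorname{Diff}$-natural scalar concomitant of $g$ that is homogeneous of weight zero (e.g.\ a ratio of two curvature scalars, or the scalar curvature divided by $|\mathcal L|$ in analogy with $I$) pulls back correctly. Carrying out that homogeneity bookkeeping for an explicit choice of $J$ is the one genuinely technical point; the rest is formal once Theorem~\ref{theorem_rank_D2} is in hand.
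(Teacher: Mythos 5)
Your Step 1 is essentially the paper's own argument: the map $j^{r+2}_{j_x^1\sigma}\mathcal{L}\mapsto j^r_{j_x^1\sigma}\bigl(\operatorname{Hess}(\mathcal{L})\bigr)$ (the paper's $\Theta^r$) together with the equivariance coming from Proposition~\ref{covv}. But Step 2 contains a genuine error on which your whole conclusion rests: there are \emph{no} nonconstant metric invariants of order $1$, for any $m$. All first derivatives of a metric can be normalized to zero at a point (normal coordinates), and the count in \cite{JA} gives zero functionally independent invariants of order $\leq 1$; the paper itself states that every metric invariant of a general metric is of order $\geq 2$. So the ``first-order differential concomitant of $g$, existing precisely when $m\geq 2$'' does not exist, and the deduction ``$J$ has order exactly $1$, hence $J\circ\mu$ has order exactly $3$'' collapses; the naive composition with the lowest-order genuine invariants (curvature scalars, order $2$ in $g$) would only give Lagrangian invariants of order $4$. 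The ingredient you are missing, and which the paper's proof turns on, is the special structure of Hessian metrics: since $g_{hi}=\partial^2\mathcal{L}/\partial\dot y^h\partial\dot y^i$, the curvature tensor of a Hessian metric is expressible through the \emph{third} derivatives of the potential $\mathcal{L}$ (\cite[Proposition 2.3--(1)]{Shima}; see the explicit Gaussian-curvature formula for $m=2$ in the paper), so the order in $\mathcal{L}$ drops from $4$ to $3$. That fact, combined with the nonexistence of metric invariants of order $0$ and $1$, simultaneously yields new invariants at order exactly $3$ for $m\geq 2$ and shows none of the induced invariants can occur at order $2$, which is what reconciles the proposition with Theorem~\ref{theorem_rank_D2}.

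Concerning the conformal factor $\phi'$ in Step 1: the worry is legitimate (the paper passes over it, invoking only Proposition~\ref{covv}, which covers the action $\mathcal{L}\mapsto\mathcal{L}\circ\Phi^{(1)}$ and hence the vertical subgroup), but your first resolution is wrong: a metric invariant is \emph{not} automatically invariant under $g\mapsto cg$ (scalar curvature is the standard counterexample), and fibrewise homotheties do not act on $r$-jets of metrics at a fixed point by the uniform rescaling you would need. Your second resolution --- keeping only weight-zero concomitants or compensating with powers of $\mathcal{L}$, in the spirit of $I=V/\mathcal{L}$ in Proposition~\ref{proposition_V_I} (note that $\phi'$ is constant on each fibre $F_{x,\sigma(x)}$, so a fibrewise curvature scalar picks up a clean power of $\phi'$ that a power of $\mathcal{L}$ cancels) --- is the right kind of normalization, but you leave it explicitly unexecuted, so as written the first assertion is established only for $\operatorname{Aut}^v(p)$ rather than for the full group $\operatorname{Aut}(p)$.
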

\begin{proof}
Let $O^{\prime 2}\subset J^2(q)$ be the dense open subset of elements
$j_{j_{x_0}^1\sigma _0}^2(\mathcal{L})$ for which the Hessian metric
$\operatorname{Hess}_{j_{x_0}^1\sigma _0}(\mathcal{L})$ is non-singular
and $\mathcal{L}(j_{x_0}^1\sigma _0)\neq 0$, as in
\S \ref{basic_invariant}.

Let $q^{kh}\colon J^k(q)\to J^h(q)$, $k\geq h$,
be the canonical projection. For every $r\geq 2$,
let $O^{\prime r}$ be the dense open subset in
$J^r(q)$ given by
$O^{\prime r}=(q^{r2})^{-1}(O^{\prime2})$ and for every
$r\geq 0$ let $\Theta ^r\colon J^{r+2}(O^{\prime2})
\to J^r(\mathcal{M})$ the fibred map defined as
\[
\Theta ^r\left( j_{j_x^1\sigma }^{r+2}\mathcal{L}\right)
=j_{j_x^1\sigma }^r\left( \operatorname{Hess}(\mathcal{L})\right) ,
\]
which is $\operatorname{Aut}(p)$-equivariant with respect to the natural
actions on these spaces by virtue of Proposition \ref{covv}. Hence, every
invariant function $I\in C^\infty (J^r\mathcal{M})$ induces an invariant
function $I\circ\Theta ^r$ of order $r+2$.

Moreover, as is well known, the basic metric invariants are the scalar
contractions of the successive covariant differentials of the curvature
tensor of the corresponding Levi-Civita of a metric. Hence, for a general
metric $g\in \Gamma (\varkappa )$, every metric invariant is of order
$\geq 2 $, but the curvature tensor of a Hessian metric
\[
\begin{array}
[c]{rlrl}
g= & \sum _{h,i=1}^{m}g_{hi}d\dot{y}^{h}\otimes d\dot{y}^{i},
& g_{hi}=g_{ih} &
=\frac{\partial^2\mathcal{L}}{\partial\dot{y}^{h}\partial\dot{y}^i},
\end{array}
\]
depends on the third derivatives of $\mathcal{L}$ (e.g., see
\cite[Proposition 2.3--(1)]{Shima}), thus concluding.
\end{proof}
\begin{example}
If $\dim M=m=2$, then the basic metric invariant is the Gaussian curvature
(cf.\ \cite[formula (1.9)]{Duistermaat}):
\begin{equation*}
\begin{array}{l}
4\left[ \frac{\partial ^2\mathcal{L}}
{\partial \dot{y}^1\partial \dot{y}^1}
\frac{\partial ^2\mathcal{L}}
{\partial \dot{y}^2\partial \dot{y}^2}
-\left( \frac{\partial ^2\mathcal{L}}
{\partial \dot{y}^{1}\partial \dot{y}^2}
\right) ^2\right] ^{2}K= \\
-\frac{\partial ^{2}\mathcal{L}}
{\partial \dot{y}^2\partial \dot{y}^2}
\left[ \frac{\partial ^3\mathcal{L}}
{\partial \dot{y}^1\partial \dot{y}^1\partial \dot{y}^1}
\frac{\partial ^3\mathcal{L}}
{\partial \dot{y}^1\partial \dot{y}^2\partial \dot{y}^2}
-\left( \frac{\partial ^3\mathcal{L}}
{\partial \dot{y}^1\partial \dot{y}^1\partial \dot{y}^2}
\right) ^2\right] \\
+\frac{\partial ^{2}\mathcal{L}}
{\partial \dot{y}^1\partial \dot{y}^2}
\left[ \frac{\partial ^{3}\mathcal{L}}
{\partial \dot{y}^1\partial \dot{y}^1\partial \dot{y}^1}
\frac{\partial ^3\mathcal{L}}
{\partial \dot{y}^2\partial \dot{y}^2\partial \dot{y}^2}
-\frac{\partial ^3\mathcal{L}}
{\partial \dot{y}^1\partial \dot{y}^1\partial \dot{y}^2}
\frac{\partial ^3\mathcal{L}}
{\partial \dot{y}^1\partial \dot{y}^2\partial \dot{y}^2}
\right] \\
-\frac{\partial ^2\mathcal{L}}
{\partial \dot{y}^1\partial \dot{y}^1}
\left[ \frac{\partial ^3\mathcal{L}}
{\partial \dot{y}^1\partial \dot{y}^1\partial \dot{y}^2}
\frac{\partial ^3\mathcal{L}}
{\partial \dot{y}^2\partial \dot{y}^2\partial \dot{y}^2}
-\left( \frac{\partial ^3\mathcal{L}}
{\partial \dot{y}^1\partial \dot{y}^2\partial \dot{y}^2}
\right) ^2\right] ,
\end{array}
\end{equation*}
which is an invariant of third order.
\end{example}

\bigskip

\noindent (M.C.L) \textsc{Instituto de Ciencias Matem\'aticas,
CSIC-UAM-UC3M-UCM, Departamento de Geometr\'{\i}a y Topolog\'{\i}a, Facultad
de Matem\'{a}ticas, UCM, Avda. Complutense s/n, 28040-Madrid, Spain}

\noindent \emph{E-mail:\/} \texttt{mcastri@mat.ucm.es}

\medskip

\noindent (J.M.M) \textsc{Instituto de Tecnolog\'{\i}as F\'{\i}sicas y de la
Informaci\'on, Consejo Superior de Investigaciones Cien\-t\'{\i}\-fi\-cas,
C/ Serrano 144, 28006-Madrid, Spain}

\noindent \textit{E-mail:\/} \texttt{jaime@iec.csic.es}

\medskip

\noindent (E.R.M) \textsc{Departamento de Matem\'atica Aplicada, Escuela
T\'ecnica Superior de Arquitectura, Universidad Polit\'ecnica de Madrid,
Avda.\ Juan de Herrera 4,\ 28040-Madrid, Spain}

\noindent \emph{E-mail:\/} \texttt{eugenia.rosado@upm.es}

\end{document}